\newtheorem{thm}{Theorem}[section]
\newtheorem{lem}[thm]{Lemma}
\newtheorem{prop}[thm]{Proposition}
\renewcommand\[{\begin{equation}}
\renewcommand\]{\end{equation}}
\newtheorem{defn}[thm]{Definition}
\theoremstyle{remark}
\theoremstyle{remark}
\newtheorem{rem}[thm]{Remark}
\newtheorem{eg}[thm]{Example}
\numberwithin{equation}{section}
\newcommand{\R}{\mathbb R}
\newcommand{\C}{\mathbb{C}}
\newcommand{\Pro}{\mathbb{P}}
\newcommand{\Z}{\mathbb{Z}}
\newcommand{\mc}{\mathcal}
\newcommand{\mr}{\mathrm}
\newcommand{\mf}{\mathbf}
\newcommand{\mb}{\mathbb}
\newcommand{\mk}{\mathfrak}
\newcommand{\al}{\alpha}
\newcommand{\be}{\beta}
\newcommand{\la}{\lambda}
\newcommand{\Ga}{\Gamma}
\newcommand{\de}{\delta}
\newcommand{\De}{\Delta}
\newcommand{\si}{\sigma}
\newcommand{\Si}{\Sigma}
\newcommand{\ul}{\underline}
\newcommand{\inverse}{^{-1}}
\newcommand{\mkg}{\mk{g}}
\newcommand{\mcL}{\mathcal{L}}
\newcommand{\mcO}{\mathcal{O}}
\newcommand{\mfi}{\mf{i}}
\newcommand{\mfm}{\mf{m}}
\newcommand{\Proj}{\mr{Proj}}
\newcommand{\Spec}{\mr{Spec}}
\newcommand{\SL}{\mr{SL}}
\newcommand{\lee}{\langle}
\newcommand{\ree}{\rangle}
\newcommand*{\boxednumber}[1]{%
      \expandafter\readdigit\the\numexpr#1\relax\relax
}
\newcommand*{\readdigit}[1]{%
      \ifx\relax#1\else
            \boxeddigit{#1}%
             \expandafter\readdigit
      \fi
}
\newcommand*{\boxeddigit}[1]{\fbox{#1}\hspace{-\fboxrule}}
\begin{document}

\title[]{Singular string polytopes and functorial resolutions from Newton-Okounkov bodies}%

\author{Megumi Harada}
\address{Department of Mathematics and
Statistics\\ McMaster University\\ 1280 Main Street West\\ Hamilton, Ontario L8S4K1\\ Canada}
\email{Megumi.Harada@math.mcmaster.ca}
\urladdr{\url{http://www.math.mcmaster.ca/Megumi.Harada/}}

\author{Jihyeon Jessie Yang}
\address{Department of Mathematics\\ Marian University \\ 3200 Cold Spring Road \\ 
Indianapolis, Indiana 46222-1997 \\ U.S.A.}
\email{jyang@marian.edu}

\keywords{Toric degenerations, Newton-Okounkov bodies, string polytopes, generalized string polytopes, Bott-Samelson varieties, Bott towers, resolutions of singularities}
\subjclass[2000]{Primary:14M15; Secondary: 20G05}

\date{\today}

\begin{abstract}
The main result of this note is that the toric degenerations of flag varieties associated to string polytopes and certain Bott-Samelson resolutions of flag varieties fit into a commutative diagram which gives a resolution of singularities of singular toric varieties corresponding to string polytopes. Our main tool is a result of Anderson which shows that the toric degenerations arising from Newton-Okounkov bodies are functorial in an appropriate sense. We also use results of Fujita which show that Newton-Okounkov bodies of Bott-Samelson varieties with respect to a certain valuation $\nu_{max}$ coincide with generalized string polytopes, as well as previous results by the authors which explicitly describe the Newton-Okounkov bodies of Bott-Samelson varieties with respect to a different valuation $\nu_{min}$ in terms of Grossberg-Karshon twisted cubes. A key step in our argument is that, under a technical condition, these Newton-Okounkov bodies coincide.  
\end{abstract}

\maketitle

\begin{center}
\emph{This is a preliminary version. Comments are welcome.}
\end{center}

%%%%%%%%%%%%%%%%%%%%%%%%%%%%%%%%%%%%%%%%%%%%%%%%%

%%%%%%%%%%%%%%%%%%%%%%%%%%%%%%%%%%%%%%%%%%%%%%%%%%
\section{Introduction}

The theory of toric varieties yields a powerful dictionary between the algebraic geometry of toric varieties and the combinatorics of fans and polytopes, and it has been of longstanding interest to extend this machinery to other classes of varieties through the study of toric degenerations of a given variety. A rich test case is that of flag varieties. We refer the reader to the survey \cite{FFL}, which explains that two recent developments have invigorated this subject: firstly, the development of the theory of Newton-Okounkov bodies and their associated toric degenerations (as well as relations to equivariant symplectic geometry) \cite{Kaveh-Khovanskii, Kaveh, Anderson}, and secondly, the program of Gross, Hacking, Keel, and Kontsevich concerning cluster varieties \cite{GHK, GHKK}. 

In this note we study a class of toric degenerations of the flag variety $G/B$ associated to string polytopes, constructed using the theory of Newton-Okounkov bodies \cite{Fujita, Anderson}. Part of the motivation for studying a toric degeneration is to extract information about the original variety $X$ from the toric degeneration $X_0$. From this point of view, it would often be more convenient if the degeneration, i.e. the toric variety $X_0$, is non-singular.  The point of departure for us is that the string polytopes -- which include, for example, the famous Gel'fand-Zeitlin polytopes -- are in general singular (i.e. their associated toric varieties are singular). Thus, the question naturally arises whether there is another route by which we may naturally associate non-singular toric varieties to flag varieties.  The theory of Bott-Samelson varieties, which are birational to flag (or Schubert) varieties, and their degenerations to the so-called \emph{Bott towers}, offer plenty of candidates. The relevant question then becomes whether or not the Bott towers can be related in a systematic manner to both the classical string polytopes and the generalized string polytopes introduced in \cite{Fujita}. 
Philosophically, then, we had the following picture in mind, where by ``TV'' we mean ``the toric variety of'': 
\begin{equation}\label{eq: motivation} 
\xymatrix{ 
\textup{ Bott-Samelson varieties } \ar[r]^{\textup{birational}} \ar[d]^{\textup{degenerate}} & \textup{ flag varieties} \ar[d]^{\textup{degenerate}} \\
\textup{ (non-singular) Bott towers } \ar @{.>}[r]^(0.5){?} & \textup{TV(string polytopes)}
}
\end{equation}
and we wondered whether we could complete such a commutative diagram in a natural or systematic way.

The main contribution of this note is Theorem~\ref{thm:main}, which makes~\eqref{eq: motivation} precise. 
We obtain Theorem~\ref{thm:main} by using the observation that the toric degenerations associated to Newton-Okounkov bodies are functorial in a sense explained by Anderson in \cite{Anderson}. We record his result in Theorem~\ref{thm:functoriality}. The fact that we could always construct a diagram~\eqref{eq: motivation} with a \emph{non-singular} toric variety in the lower-left corner required our previous work on Newton-Okounkov bodies of Bott-Samelson varieties in terms of Grossberg-Karshon twisted cubes, and in particular, an observation (recorded in Proposition~\ref{prop:equivalence}) that, under a technical hypothesis, the Newton-Okounkov bodies computed in \cite{HY1} coincide with those computed by Fujita in \cite{Fujita}. From there, a technical argument (Proposition~\ref{prop: nice m}) based on principles similar to those in \cite{HY1} allows us to construct suitable non-singular Bott towers to place in the lower-left corner of~\eqref{eq: motivation}. 
In order to see that the toric degenerations obtained through Anderson's construction \cite{Anderson} correspond to the expected (normal) toric varieties, we use results of \cite{HY1} that certain polytopes are lattice polytopes.

We end with brief remarks about our motivation for this paper and potential directions for future work. First, in this note we restrict to the case of $G/B$ for simplicity, but it should be possible to generalize our work to Schubert varieties in $G/B$. Second, we were partly motivated by the work of Kiritchenko, Smirnov, and Timorin \cite{KST} which gives a polytope-theoretic formulation of Schubert calculus using the Gel'fand-Zeitlin polytopes. As already hinted above, an issue that arises in \cite{KST} is that the Gel'fand-Zeitlin polytopes are singular, and in their work they consider associated non-singular polytopes, constructed from the Gel'fand-Zeitlin polytopes. Thus, we wondered whether these non-singular polytopes have a geometric meaning. Finally, recent work of e.g. Abe, Horiguchi, Murai, Masuda, Sato \cite{AHMMS} and others \cite{ADGH, AHHM} suggest that there are interesting relationships between: the cohomology rings of Hessenberg varieties (which are certain subvarieties of the flag variety) and their associated volume polynomials, on the one hand, and string polytopes and (the volumes of unions of) their faces, on the other hand. Moreover, as in the Schubert calculus considerations of e.g. \cite{KST}, the singularity of the string polytopes arises as an issue. Thus, one of our motivations was that our resolutions may potentially be used in the theory of Hessenberg varieties and their volume polynomials.

\medskip
\noindent \textbf{Acknowledgements.} We thank Dave Anderson for a valuable conversation which gave us the idea for this manuscript.

\section{Background: Newton-Okounkov bodies and toric degenerations}\label{sec:background}

In this section, we give a precise statement (Theorem~\ref{thm:functoriality}) of Anderson's results concerning the functoriality of the toric degenerations associated to Newton-Okounkov bodies. This work of Anderson is both the motivation and the main tool used in this paper. In order to state Theorem~\ref{thm:functoriality} we need to review the theory of Newton-Okounkov bodies and their associated degenerations. Since this material is treated elsewhere we keep discussion very brief. For details we refer to e.g. \cite{Kaveh-Khovanskii, Kaveh}.

Let $X$ be a projective algebraic variety over $\C$. The theory of Newton-Okounkov bodies associates to $X$, together with some auxiliary data, a convex body (i.e., a compact convex set) in a real vector space, called the Newton-Okounkov body of $X$. 
Throughout, we fix $< = <_{\mr{lex}}$ to be the usual lexicographic order on $\Z^n$, i.e., 
$(x_1,\dots,x_n)<_{\mr{lex}}(y_1,\dots,y_n)$ if and only if the leftmost non-zero value in the sequence, $x_1-y_1, x_2-y_2,\dots, x_n-y_n$,  is strictly negative.

\begin{defn}\label{def:prevaluation}
  Let $V$ be a vector space over $\C$. A {\bf  ($\Z^n$-)prevaluation} on $V$   is a function  
  \[v:V\setminus\{0\}\rightarrow \Z^n\] such that:
        \begin{enumerate}
          \item $v(\la f)=v(f)$ for all $0\ne f\in V$ and $0\ne\la\in \mf{k}$, and 
          \item $v(f+g)\ge \min\{v(f), v(g)\}$ for all $f, g\in V$ with $f, g, f+g$ all nonzero.          \end{enumerate}
    \end{defn}

It is not hard to see that any set of non-zero vectors in $V$ with distinct values under a prevaluation $v$ is linearly independent \cite[Proposition 2.3]{Kaveh-Khovanskii}. From this it also follows that $\dim(V) \ge \lvert v(V \setminus \{0\}) \rvert$ if $V$ is finite-dimensional.

We can generalize the notion of prevaluations on vector spaces to the notion of valuations on algebras. Note that our choice of total order has the property that it respects the addition, i.e., $a<b$ implies $a+c< b+c$ for any $a,b,c\in \Z^n$. 

 \begin{defn} Let $R$ be an algebra over $\C$ and let $<$ be a total order on $\Z^n$ respecting addition. A {\bf $\Z^n$- valuation} on $R$   is a prevaluation $v:R\setminus\{0\}\rightarrow\Z^n$ which additionally satisfies the following:
             \[v(fg)=v(f)+v(g)   \textup{ for any } f, g\in R\setminus\{0\}. \]
\end{defn}

One of the main motivations to associate convex bodies to an algebraic variety $X$ is to study the variety from a combinatorial point of view. For this purpose, it turns out to be useful to impose an extra condition on the (pre)valuation. Let $v:V\setminus\{0\}\rightarrow \Z^n$ be a prevaluation. For any $\al\in \Z^n$ we define a subspace $V_{\ge \al}$ of $V$ by 
 \[V_{\ge\al}:=\{f\in V: \text{ either } v(f)\ge \al \text{ or } f=0\}.\] 
 By the condition (2) in Definition~\ref{def:prevaluation}, the union $\bigcup_{\beta > \al} V_{\ge \beta}$ is a subspace of $V_{\ge \al}$. We then call the quotient space  
  \[\hat{V}_{\al}:=V_{\ge \al}/\bigcup_{\be>\al} V_{\ge \be}\]
the \textbf{leaf above $\al$}.

\begin{defn}\label{def:one-dim leaves}
A prevaluation $v:V\setminus\{0\}\rightarrow \Z^n$ is said to have {\bf one-dimensional leaves} if 
$\mathrm{dim}(\hat{V}_{\al}) \leq 1$ for all $\al$.
\end{defn}

From the definitions it is straightforward to see that 
 \[\dim V=\sum_{\al\in\Z^n} \dim \hat{V}_{\al}\]
assuming that $V$ is finite-dimensional, 
and it then follows from Definition~\ref{def:one-dim leaves} that 
if $v$ has one-dimensional leaves, then the inequality $\dim(V) \ge \lvert v(V \setminus \{0\}) \rvert$ is in fact an equality, i.e., 
\[
\dim(V) = \lvert v(V \setminus \{0\}) \rvert.
\]

Now we recall the construction of a Newton-Okounkov body of a complex projective variety $X$ associated to certain choices, as detailed below.

\begin{itemize}
 \item [Step 1.] Fix an embedding of $X$ in a projective space $\Pro^N$.
 \item [Step 2.] Let $R=\oplus_{d\in \Z_{\ge 0}}R_d$ denote the homogeneous coordinate ring of $X$ in $\Pro^N$. Here $R_1=H^0(X,\mc{O}_X(1))$ and $R_0=\C$, where $\mcO_X(1)$ is the tautological line bundle $\mcO(1)$ on $\Pro^N$ restricted to $X$. (Note each $R_k$ is finite-dimensional.) 

\item [Step 3.] Let $v:R\setminus\{0\}\rightarrow \Z^n$ be a fixed choice of a ($\Z^n$-)valuation on $R \setminus \{0\}$ with one-dimensional leaves on each $R_k \setminus \{0\}$. 
\item [Step 4.] Define an extention $\hat{v}$ of $v$ to account for the $\Z_{\geq 0}$-grading on $R$ as follows: 
 \[\hat{v}:R\setminus\{0\}\rightarrow \Z_{\ge 0}\times \Z^n, \quad \hat{v}(f):=(d, v(f_d)),\] where $f_d$ is the homogeneous component of $f$ of degree $d :=\deg(f)$.  Here we assign a total order on $\Z_{\ge 0}\times \Z^n$ by 
 \begin{equation}\label{eq: extended v order}
 (d_1, \ul{x})< (d_2, \ul{y}) \text{ if and only if either } d_1<d_2, \text{ or } d_1=d_2 \text{ and }-\ul{x}<_{\mr{lex}}- \ul{y} \quad(\text{equivalently, } \ul{y}<_{\mr{lex}}\ul{x}),
 \end{equation}
  where $d_1,d_2\in\Z_{\ge 0}$ and $ \ul{x}, \ul{y}\in\Z^n.$

\item [Step 5.] Let $\Ga:=\Ga(R,v)\subset \Z_{\ge 0}\times \Z^n$ denote the image of $\hat{v}$. (Note that $\Ga$ is a $\Z_{\ge 0}$-graded semigroup.)
\item [Step 6.] Let $\mr{Cone}(R,v)$ denote the closure of the convex hull of $\Ga$ in the real vector space $\R\times\R^n$.

\item [Step 7.] Define {\bf the Newton-Okounkov body of $(R,v)$}  as the ``level-$1$ slice'' of $\mr{Cone}(R,v)$, i.e., \[\De(R,v):=\mr{Cone}(R,v)\cap (\{1\}\times \R^n).\]
\end{itemize}

We now turn our attention to degenerations of $X$ associated to the above construction. For details we refer to \cite{Anderson}. 
Our choice of $\Z^n$-valuation $v$ on $R$ in Step 3 above gives rise to a filtration of $R$ indexed by the semigroup $\Ga$ as follows: Let 
 \[R_{\le (d, \ul{x})}:= \{f\in R \hspace{2mm} | \hspace{2mm}  \hat{v} (f)\le (d, \ul{x})\}\] 
  It is immediate from the definitions that if $(d, \ul{x})\le (d',\ul{x}')$ then $R_{\le (d,\ul{x})}\subset R_{\le (d',\ul{x}')}$. Thus $R$ is $\Ga$-filtered. Therefore, there is a natural associated $\Ga$-graded algebra
  \[\mr{gr} R:= \mr{gr}_v R:=\bigoplus_{(d,\ul{x})\in \Ga}R_{\le (d,\ul{x})}/  R_{< (d,\ul{x})}.\]

\begin{prop}[\protect{\cite[Proposition 3]{Anderson}}]\label{prop: Anderson}
  Suppose that the associated graded algebra $\mr{gr}R$ is finitely generated. Then there is a finitely generated, $\Z_{\ge 0}$-graded, flat $k[t]$-subalgebra $\mc{R}\subset R[t]$ such that
  \begin{itemize}
    \item $\mc{R}/t\mc{R}\cong \mr{gr} R$ and
    \item $\mc{R}[t\inverse]\cong R[t,t\inverse]$ as $k[t,t\inverse]$-algebras
  \end{itemize}
Geometrically, the above implies that there is a flat family of affine varieties $\hat{\mk{X}}\rightarrow \mb{A}^1$ such that the general fiber $\hat{X}_t$ is isomorphic to $\hat{X}=\Spec R$  for $t\ne 0$, and the special fiber $\hat{X}_0$ has an action of the torus $\C^*\times (\C^*)^n$.
Furthermore, there is another $\Z_{\ge 0}$-grading on $\mc{R}$ which is compatible with  the $\Z_{\ge 0}$-grading of $R$. Taking $\Proj$ with respect to this grading, we obtain a projective flat family $\mk{X}=\Proj \mc{R}\rightarrow\mb{A}^1$, with general fiber isomorphic to $X=\Proj R$ and special fiber $X_0=\Proj (\mr{gr}R)$ equipped with an action of $(\C^*)^n$.
\end{prop}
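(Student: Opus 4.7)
The plan is to construct $\mc{R}$ as an ordinary Rees algebra associated to a suitable $\Z$-valued filtration of $R$, obtained by flattening the $\Gamma$-filtration along an appropriately chosen linear functional. Since the total order \eqref{eq: extended v order} on $\Z_{\ge 0}\times \Z^n$ is not the restriction of a single linear order, the main task is to replace it, locally, by one that is.

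First, I would use the hypothesis that $\mr{gr} R$ is finitely generated: choose bihomogeneous generators whose bidegrees $\ga_1,\dots,\ga_r\in \Ga\subset \Z_{\ge 0}\times\Z^n$ form a finite set, and lift each to an element $f_i\in R$ with $\hat v(f_i)=\ga_i$. Then I would pick an integer linear functional $\ell:\Z\times \Z^n\to \Z$ that is strictly monotone with respect to the order \eqref{eq: extended v order} on the (finite) set of differences $\ga_i-\ga_j$; explicitly, $\ell(d,\ul x)=Nd - \sum_j M^j x_j$ for a sufficiently large $N$ and a rapidly decreasing weight sequence $M\gg 0$ works, since these differences lie in a finite subset of $\Z\times\Z^n$ on which the lexicographic order can always be realized by a linear form. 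Setting $F_n R:=\{f\in R\setminus\{0\}:\ell(\hat v(f))\le n\}\cup\{0\}$ then yields an exhaustive, separated, $\Z_{\ge 0}$-indexed algebra filtration whose associated graded algebra is canonically isomorphic to $\mr{gr}_v R$ (regraded by $\ell$); the one-dimensional-leaves hypothesis ensures that no collapsing of distinct leaves occurs because $\ell$ is strict on the finite generating set.

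Second, with this $\Z$-filtration in hand, I would take the classical Rees algebra
\[
\mc{R}\;:=\;\bigoplus_{n\ge 0} F_n R\cdot t^n \;\subset\; R[t].
\]
Standard Rees-algebra arguments then deliver the three bulleted conclusions at once: $\mc{R}$ is $k[t]$-flat because $t$ acts as a nonzerodivisor (equivalently, $\mc{R}$ is torsion-free over $k[t]$); the natural map $\mc{R}/t\mc{R}\to \mr{gr}_F R\cong \mr{gr}R$ is an isomorphism by comparing graded pieces; and $\mc{R}[t^{-1}]\cong R[t,t^{-1}]$ holds because every $f\in R$ eventually lies in some $F_n R$. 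Finite generation of $\mc{R}$ over $k[t]$ follows from finite generation of $\mr{gr}R$ together with the lifts $f_i$, using the standard lift-generators-of-$\mr{gr}$-back-to-$\mc{R}$ argument.

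Finally, to obtain the projective family $\mk X=\Proj\mc R\to \mb A^1$, I would observe that the original $\Z_{\ge 0}$-grading of $R$ (by $d=\deg$) is automatically respected by $F_\bullet$, since $\hat v$ preserves the grading and $\ell$ is linear; hence $\mc R$ carries a second $\Z_{\ge 0}$-grading compatible with the $k[t]$-algebra structure, and $\Proj$ with respect to this grading gives the required family. The torus action on $X_0$ comes from the residual $\Z^n$-grading on $\mr{gr}R$ that survives after quotienting by $\ell$. The main obstacle is the first step: arranging an integer linear functional $\ell$ whose induced filtration refines rather than coarsens the $\Gamma$-filtration on the finite set of valuations coming from generators of $\mr{gr}R$; once this combinatorial point is settled, the rest is a formal Rees-algebra exercise.
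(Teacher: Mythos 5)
The paper does not prove this proposition; it is quoted from Anderson, and Anderson's own proof follows exactly the strategy you outline (flatten the $\Ga$-filtration by an integral linear functional, then take the ordinary Rees algebra). So your outline is the right one. However, there is a genuine gap at the one step that is actually delicate, namely the choice of $\ell$. Requiring $\ell$ to be strictly monotone only on the finite set of differences $\ga_i-\ga_j$ of the valuations of the \emph{generators} does not guarantee $\mr{gr}_F R\cong \mr{gr}_v R$. Two distinct elements $\ga,\ga'\in\Ga$ that are not themselves differences of generators (e.g.\ $\ga=2\ga_1$, $\ga'=\ga_2+\ga_3$) can still be collapsed by $\ell$, and such collapsing genuinely changes the associated graded algebra: if a defining relation of $R$ has the form $f_1f_2-cf_3^2=g$ with $\hat v(g)>\hat v(f_1f_2)=2\hat v(f_3)$ but $\ell(\hat v(g))=\ell(2\hat v(f_3))$, then the initial form of this relation with respect to the $\ell$-filtration retains the term $g$, so $\mc{R}/t\mc{R}\cong\mr{gr}_F R$ is a nontrivial deformation of $\mr{gr}_v R$ rather than being isomorphic to it. This is exactly the familiar Gr\"obner-basis phenomenon of a weight vector failing to select the intended initial ideal. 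Note also that no single integral linear functional can be strictly order-preserving (hence injective) on all of $\Ga$ in general, so ``no collapsing occurs'' is not something you can arrange; what you must arrange is that the collapsing is harmless.

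The repair, which is what Anderson's lemma actually provides, is to use finite generation of $\mr{gr}R$ to fix a finite presentation of $R$ adapted to $v$ (generators $f_1,\dots,f_r$ lifting generators of $\mr{gr}R$, together with finitely many relations lifting generators of the ideal of relations of $\mr{gr}R$), and then to choose $\ell$ strictly order-preserving on the \emph{finite set of all valuation vectors of the monomials appearing in these relations}, in addition to being weakly order-preserving where needed and non-negative on $\Ga$. With that strengthened choice, $\mr{gr}_F R\cong\mr{gr}_v R$ and the rest of your argument (flatness via $t$-torsion-freeness, $\mc{R}[t\inverse]\cong R[t,t\inverse]$, finite generation by lifting generators, the residual second $\Z_{\ge 0}$-grading, and the torus action on the special fiber) is indeed routine. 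A separate small point: you invoke the one-dimensional-leaves hypothesis, but Proposition~\ref{prop: Anderson} does not assume it (it enters only in Proposition~\ref{prop:1 dimensional leaves}), and in any case it does not prevent the collapsing issue above.
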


We observed above that if $v$ has one-dimensional leaves then there is a relation between the image of $v$ and a basis for $V$. The following lemma records another consequence of the property that $v$ has one-dimensional leaves, which is essential in constructing toric degenerations associated to Newton-Okounkov bodies. 

\begin{prop}[\protect{\cite[proof of Theorem 2]{Anderson}}]\label{prop:1 dimensional leaves}
  Suppose that the valuation $v$ on $R$ has one-dimensional leaves. Then  the associated graded algebra $\mr{gr} R$ is isomorphic to the semigroup algebra $\C[\Ga]$.
\end{prop}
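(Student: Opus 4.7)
The plan is to construct an explicit isomorphism $\phi\colon \C[\Ga] \to \mr{gr} R$ of graded $\C$-algebras by selecting a basis of $R$ adapted to the valuation $v$. The heart of the argument is that the one-dimensional-leaves hypothesis forces each graded piece of $\mr{gr} R$ to be at most one-dimensional, while the valuation property $v(fg) = v(f)+v(g)$ (together with additivity of degree) makes the multiplication in $\mr{gr} R$ track the semigroup structure of $\Ga$.

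First, I would identify the graded pieces of $\mr{gr} R$. For $(d,\ul{x}) \in \Z_{\ge 0}\times\Z^n$, the homogeneous component $R_{\le(d,\ul{x})}/R_{<(d,\ul{x})}$ can be identified with the leaf of the prevaluation $v|_{R_d}$ above $\ul{x}$ (taking into account the order reversal in \eqref{eq: extended v order} on the $\Z^n$-factor). By the one-dimensional-leaves hypothesis, this quotient has $\C$-dimension at most one, with equality exactly when $(d,\ul{x})\in\Ga$. Second, for each $a = (d,\ul{x}) \in \Ga$ I would choose a homogeneous $f_a \in R_d$ with $\hat v(f_a) = a$, so that its class $\bar f_a$ spans $(\mr{gr} R)_a$. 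Defining $\phi$ to be the $\C$-linear extension of $e_a \mapsto \bar f_a$ gives a bijection degree by degree.

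Third, I would verify that $\phi$ is multiplicative after a suitable rescaling of the $f_a$'s. Since $\hat v$ is a valuation, $\hat v(f_a f_b) = a+b$, so $\overline{f_a f_b} = c_{a,b}\bar f_{a+b}$ in $(\mr{gr} R)_{a+b}$ for some $c_{a,b} \in \C^*$. Associativity and commutativity of multiplication in $\mr{gr} R$ (inherited from $R$) force $c\colon \Ga\times\Ga \to \C^*$ to be a symmetric $2$-cocycle on the cancellative commutative semigroup $\Ga$. One can then trivialize $c$: there exist $\mu_a\in\C^*$ satisfying $\mu_{a+b} = \mu_a\mu_b c_{a,b}$, and replacing each $f_a$ by $\mu_a f_a$ makes $c_{a,b}\equiv 1$, so that $\phi$ becomes the desired graded $\C$-algebra isomorphism.

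I expect the main technical point to be the trivialization of the cocycle $c$ in the last step. Abstractly this is the vanishing of the symmetric (equivalently, commutative-extensions) part of $H^2(-,\C^*)$, which holds for any cancellative commutative semigroup because $\C^*$ is divisible and hence injective as a $\Z$-module, so that $\mathrm{Ext}^1(-,\C^*) = 0$. Concretely, one can solve the recursion $\mu_{a+b} = \mu_a\mu_b c_{a,b}$ on a chosen generating set of $\Ga$ and extend, with consistency guaranteed by the cocycle identity. A cleaner alternative is the abstract observation that any commutative $\Ga$-graded $\C$-algebra whose graded pieces are one-dimensional on $\Ga$ and zero elsewhere is automatically isomorphic to $\C[\Ga]$.
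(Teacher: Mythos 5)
Your proof is correct and is essentially the argument the paper relies on via its citation to Anderson: one-dimensional leaves make each $\Gamma$-graded piece of $\mr{gr}R$ one-dimensional and supported exactly on $\Ga$, the valuation axiom forces all structure constants $c_{a,b}$ to be nonzero, and the resulting symmetric $2$-cocycle on the cancellative semigroup $\Ga\subset\Z_{\ge 0}\times\Z^n$ is trivializable because its group of differences is free abelian (equivalently, because $\C^*$ is divisible, hence injective). One small caution: your closing ``cleaner alternative'' is false as literally stated---one must also require that the multiplication maps $(\mr{gr}R)_a\otimes(\mr{gr}R)_b\to(\mr{gr}R)_{a+b}$ be nonzero (the algebra $\C\oplus\mathfrak{m}$ with $\mathfrak{m}^2=0$ and every $\mathfrak{m}_a$ one-dimensional is a counterexample otherwise), which is precisely the nonvanishing your main argument already extracts from $v(fg)=v(f)+v(g)$.
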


We can now make precise the statements about toric degenerations associated to Newton-Okounkov bodies. 

\begin{thm}[\protect{\cite[Theorem 1]{Anderson}}]\label{thm:NOBY degeneration}
  Suppose that the associated graded algebra $\mr{gr} R$ is finitely generated and the valuation $v$ on $R$ has one-dimensional leaves. Then $X=\Proj R$ admits a flat degeneration to the (not necessarily normal) toric variety $X_0=\Proj k[\Ga]$.  That is, there is a flat family $\mk{X}\rightarrow \mb{A}^1$  with general fiber isomorphic to $X=\Proj R$ and special fiber $X_0=\Proj \hspace{1mm} \C[\Ga]$.  Moreover, the normalization of $X_0$ is the (normal) toric variety corresponding to the Newton-Okounkov polytope $\De(R,v)$.

\end{thm}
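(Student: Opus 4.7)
The plan is to obtain the statement as a direct corollary of the two preceding propositions, together with the standard dictionary between affine semigroup algebras and (possibly non-normal) projective toric varieties. First, because $\mr{gr}R$ is finitely generated by hypothesis, Proposition~\ref{prop: Anderson} produces a projective flat family $\mk{X} = \Proj \mc{R} \to \mb{A}^1$ whose general fiber is $X = \Proj R$ and whose special fiber is $\Proj(\mr{gr}R)$, carrying an action of $(\C^*)^n$. Second, the one-dimensional leaves hypothesis lets me invoke Proposition~\ref{prop:1 dimensional leaves} to identify $\mr{gr}R \cong \C[\Ga]$ as $\Z_{\ge 0}$-graded $\C$-algebras, so that $X_0 = \Proj \C[\Ga]$. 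Since $\Ga \subset \Z_{\ge 0}\times\Z^n$ is an affine semigroup and the $(\C^*)^n$-action on $\mr{gr}R$ is transported along the isomorphism, $X_0$ is a (not necessarily normal) projective toric variety, completing the first assertion.

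For the normalization statement, the plan is to invoke the following standard facts from toric geometry: (i) the integral closure of the semigroup algebra $\C[\Ga]$ inside its field of fractions is $\C[\widetilde{\Ga}]$, where $\widetilde{\Ga}$ denotes the saturation of $\Ga$ in the group it generates; (ii) $\widetilde{\Ga}$ coincides with the set of lattice points in the cone $\mr{Cone}(R,v)$; and (iii) taking $\Proj$ with respect to the first $\Z_{\ge 0}$-grading converts the saturated graded semigroup algebra $\C[\widetilde{\Ga}]$ into the normal projective toric variety associated to the lattice polytope carved out as its degree-$1$ slice. By construction this slice is exactly $\De(R,v) = \mr{Cone}(R,v) \cap (\{1\}\times\R^n)$, so the normalization of $X_0$ is indeed the toric variety associated to $\De(R,v)$.

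The main obstacle, mild but worth flagging, is the bookkeeping of the two gradings and lattices: one must track the coarse $\Z_{\ge 0}$-grading (used for $\Proj$) alongside the finer $\Z_{\ge 0}\times\Z^n$-grading (used to see both the torus action and the semigroup structure), and verify that $\Proj$ applied to the normalization inclusion $\C[\Ga] \hookrightarrow \C[\widetilde{\Ga}]$ genuinely realizes the normalization morphism on projective schemes and that no rescaling or sublattice-index artifact enters in the identification of the degree-$1$ slice with $\De(R,v)$. Both points are standard in toric geometry once the semigroup $\Ga$ is known to generate a full-rank sublattice of $\Z\times\Z^n$, a mild nondegeneracy condition that is implicit in the setup of Step~7.
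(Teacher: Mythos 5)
Your derivation is correct and follows exactly the route the paper intends: the theorem is quoted from Anderson, and the two propositions recorded immediately before it (Proposition~\ref{prop: Anderson} and Proposition~\ref{prop:1 dimensional leaves}) are precisely the ingredients you combine, with the normalization assertion then following from the standard identification of the integral closure of $\C[\Ga]$ with the algebra of the saturated semigroup and of its $\Proj$ with the normal toric variety of the level-one slice. The only delicate point is the one you flag yourself --- that the saturation and the polytope must be read relative to the sublattice generated by $\Ga$ rather than all of $\Z\times\Z^n$ --- and acknowledging it is enough here, since that bookkeeping is carried out in the cited source.
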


Furthermore,  it is shown in \cite{Anderson} these toric degenerations are ``functorial'' in a certain sense which we now make precise. This point of view was the motivation for the current note.  To state the relevant results, we need to introduce a notion of {\em compatibility}. Suppose we have the following.

\begin{itemize}
  \item
Let $R$ and $R'$ be $\Z_{\ge 0}$-graded algebras over $\C$ and let $\phi:R\rightarrow R'$ be a graded ring homomorphism.

\item Let $v$ (resp. $v'$) be $\Z^n$- (resp. $\Z^{n'}$-) valuations on $R$ (resp. $R'$).  Also let $h:\Z^n\rightarrow \Z^{n'}$ be a map of ordered groups such that the induced map $\mr{id}\times h:\Z\times\Z^n\rightarrow\Z\times\Z^{n'}$ is a map of ordered groups with respect to the order defined in~\eqref{eq: extended v order}. 
\end{itemize}

In the setting above, we make the following definition \cite[Definition 4]{Anderson}. 

\begin{defn}
We say that the valuations $v$ and $v'$ are {\bf compatible with $\phi$ and $h$} if the following diagram

\[\xymatrix{R\setminus J\ar[rr]^{\phi}
\ar[d]^v && R'\setminus\{0\}\ar[d]^{v'}\\
\Z^n\ar[rr]^{h}&& \Z^{n'}
}\]

commutes, where $J := \mathrm{ker}(\phi)$ is the kernel of the given map $\phi$.
\end{defn}

The following is a summary of results obtained in \cite{Anderson}.

\begin{thm}[\protect{\cite[cf. Proposition 4, Example 8]{Anderson}}]\label{thm:functoriality}

We use notation as above. 
\begin{enumerate} 
  \item (Algebraic statement) Suppose that the valuations $v$ and $v'$ are compatible with $\phi$ and $h$. Suppose that $\mr{gr} R$ and $\mr{gr} R'$ are finitely generated (hence $R$ and $R'$ satisfy the hypothesis in Proposition~\ref{prop: Anderson}. Then there exist Rees algebras $\mc{R}$ and $\mc{R}'$ associated to $R$ and $R'$ as described in Proposition~\ref{prop: Anderson} and a $k[t]$-algebra homomorphism $\Phi:\mc{R}\rightarrow \mc{R'}$, which preserves the $\Z_{\ge 0}\times \Z_{\ge 0}$-gradings.

  \item (Geometric statement) Let  $\mcL$ be a very ample line bundle on a projective algebraic variety $X$. Let $V=H^0(X,\mcL)$ and $R=R(V)$ be the homogeneous coordinate ring of $X$ in $\Pro(V^*)$ so that $X=\Proj R$.  Suppose $v$ is a valuation on $R$ such that the semigroup $\Ga(R, v)$ is finitely generated. Let $V'\subset V$ be a subspace such that the corresponding rational map $\psi: X=\Proj(R)\rightarrow X'=\Proj R'$ is a birational isomorphism, where $R' = \oplus_{m\ge 0} V'^m$ is the graded $\C$-algebra generated by $V'$. Also suppose that the semigroup $\Ga(R', v)$ is finitely generated. Then $\De(R',v)\subset \De(R, v)$ and the birational morphism $\psi:X\rightarrow X'$ degenerates to a birational morphism of toric varieties $\psi_o: \Proj \hspace{1mm} \C[\Ga(R, v)] \rightarrow \Proj \hspace{1mm} \C[\Ga(R', v)]$. That is, there is a commutative diagram
      \[\xymatrix{\mk{X}(V)\ar[rr]^{\Psi}
\ar[dr] && \mk{X}(V')\ar[dl]\\
&\C&
}\] such that $\Psi_t\cong \psi$ for $t\ne 0$, and $\Psi_0=\psi_0$.

\end{enumerate}

\end{thm}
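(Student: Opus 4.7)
The plan is to derive both parts by unpacking the Rees algebra construction of Proposition~\ref{prop: Anderson} and tracking the compatibility data $(\phi,h)$ through it. For part~(1), recall that the Rees algebra $\mc{R}\subset R[t]$ is built from the $\Ga$-filtration $R_{\le(d,\ul{x})}$ induced by $v$, with $t$ encoding the filtration level. The compatibility equation $v'\circ\phi = h\circ v$ on $R\setminus J$, combined with the hypothesis that $\mr{id}\times h$ respects the order of~\eqref{eq: extended v order}, implies that $\phi(R_{\le(d,\ul{x})})\subset R'_{\le(d,h(\ul{x}))}$ and that $\phi(R_{<(d,\ul{x})})\subset R'_{<(d,h(\ul{x}))}$. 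This ``filtered homomorphism'' property lifts canonically to a $k[t]$-algebra homomorphism $\Phi:\mc{R}\to\mc{R}'$, because a typical homogeneous element of $\mc{R}$ is a lift of some $f\in R_{\le(d,\ul{x})}$ sitting in bidegree $(d,(d,\ul{x}))$, and we send it to the corresponding lift of $\phi(f)\in R'_{\le(d,h(\ul{x}))}$ in bidegree $(d,(d,h(\ul{x})))$; both the internal $\Z_{\ge 0}$-grading inherited from $R$ and the $\Z_{\ge 0}$-grading coming from powers of $t$ are preserved by construction.

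For part~(2), one applies part~(1) to the inclusion $\phi:R'\hookrightarrow R$ induced by $V'\subset V$, with valuation $v|_{R'}$ on the source and $h=\mr{id}$; compatibility is then automatic. Part~(1) (with the roles of source and target reversed relative to its original statement) produces $\Phi:\mc{R}'\to\mc{R}$, and taking $\Proj$ with respect to the internal grading over $\Spec k[t]$ yields a morphism $\Psi:\mk{X}(V)\to\mk{X}(V')$ of flat families over $\mb{A}^1$. The identification $\Psi_t\cong\psi$ for $t\ne 0$ follows from the isomorphism $\mc{R}[t^{-1}]\cong R[t,t^{-1}]$ (and similarly for $\mc{R}'$) of Proposition~\ref{prop: Anderson}, while $\Psi_0$ is identified with the toric morphism $\Proj\C[\Ga(R,v)]\to\Proj\C[\Ga(R',v)]$ by reducing modulo $t$ and applying Proposition~\ref{prop:1 dimensional leaves} to identify $\mc{R}/t\mc{R}\cong\mr{gr}R\cong\C[\Ga(R,v)]$, and similarly for the primed versions. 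The inclusion $\De(R',v)\subset\De(R,v)$ is then immediate from $\Ga(R',v)=\hat{v}(R'\setminus\{0\})\subset\hat{v}(R\setminus\{0\})=\Ga(R,v)$ after passing to closed convex hulls and taking the level-one slice.

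The main obstacle, in my view, is the careful bookkeeping around the two $\Z_{\ge 0}$-gradings together with the possible nontriviality of $\ker(\phi)=J$ in part~(1): compatibility is stipulated only on $R\setminus J$, so one must verify that the associated graded construction kills $J$ consistently, so that the pieces of $\mc{R}$ coming from $J$ map to zero in $\mc{R}'$ and $\Phi$ is well-defined. A secondary subtlety in part~(2) is confirming that $\psi_0$ is a \emph{birational} morphism of toric varieties; this reduces to checking that $\De(R',v)$ and $\De(R,v)$ have the same dimension (so that the two toric varieties do as well), which in turn follows from the birationality of $\psi$ via the fact that $R$ and the Veronese of $R'$ share the same field of fractions after inverting sufficiently many sections. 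Once these two points are in hand, the diagram in the statement commutes by construction of $\Psi$ from $\Phi$.
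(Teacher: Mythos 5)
This theorem is not proved in the paper at all: it is imported verbatim from Anderson (the paper explicitly labels it ``\cite[cf. Proposition 4, Example 8]{Anderson}'' and describes it as ``a summary of results obtained in \cite{Anderson}''), so there is no internal proof to compare against. Your outline does follow the same strategy as Anderson's argument --- compatibility of $(v,v',\phi,h)$ makes $\phi$ a filtered homomorphism, this induces a graded $k[t]$-algebra map of Rees algebras, and applying $\Proj$ produces the morphism of families, identified on general and special fibers via $\mc{R}[t^{-1}]\cong R[t,t^{-1}]$ and $\mc{R}/t\mc{R}\cong\mr{gr}R\cong\C[\Ga]$ respectively --- and your handling of $J=\ker(\phi)$ and of the inclusion $\De(R',v)\subset\De(R,v)$ is fine (indeed the $J$ issue is trivial: elements of $J$ map to $0$, which lies in every filtered piece).

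Two points, however, need repair. First, the Rees algebra of Proposition~\ref{prop: Anderson} is a subalgebra of $R[t]$ in a \emph{single} variable $t$, so the $\Ga$-filtration must first be collapsed to a $\Z_{\ge 0}$-filtration by composing $\hat v$ with a suitable order-preserving linear form $e:\Z\times\Z^n\to\Z$ before one can form $\mc{R}=\bigoplus_m R_{\le m}\,t^m$; your description of a homogeneous element ``in bidegree $(d,(d,\ul{x}))$'' is really the multigraded Rees algebra, which does not by itself give a family over $\mb{A}^1$. For $\Phi$ to be a $k[t]$-algebra homomorphism one must choose the linearizations $e$ and $e'$ for $R$ and $R'$ compatibly with $h$ (roughly, $e'\circ(\mr{id}\times h)=e$ on the finitely many relevant values); this is precisely what Anderson has to arrange and it is not automatic. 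Second, your justification that $\psi_0$ is birational is insufficient: equal dimensions of $\De(R,v)$ and $\De(R',v)$ only make $\psi_0$ generically finite, not of degree one, and the function-field identification for $R$ and $R'$ does not transfer verbatim to $\C[\Ga(R,v)]$ and $\C[\Ga(R',v)]$. The standard fix is a degree (equivalently, volume) count: $\Psi_0$ pulls back $\mcO_{X'_0}(1)$ to $\mcO_{X_0}(1)$, flatness gives $\deg X_0=\deg X_t$ and $\deg X'_0=\deg X'_t$, and birationality of $\Psi_t\cong\psi$ gives $\deg X_t=\deg X'_t$; since $X_0=\Proj\C[\Ga(R,v)]$ is irreducible of dimension $n$, the projection formula then forces $\deg\Psi_0=1$.
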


\section{Newton-Okounkov bodies of Bott-Samelson varieties}

In this section, we review and compare two constructions of Newton-Okounkov bodies associated to Bott-Samelson varieties \cite{Fujita, HY2}. The main result of this section is that, under a technical condition and up to a simple linear transformation, these Newton-Okounkov bodies actually coincide (Proposition~\ref{prop:equivalence}). 

We begin with a brief discussion of Bott-Samelson varieties and their line bundles. Let $G$ be a connected and simply connected complex semisimple algebraic group and let $\mathfrak{g}$ denote its Lie algebra. Let $H$ be a Cartan subgroup of $G$, and $B$ a Borel subgroup of $G$ with $H \subset B \subset G$. We denote the rank of $G$ by $r$. 
Let $\Lambda$ denote the weight lattice of $G$
and $\Lambda_\R = \Lambda \otimes_\Z \R$ its real form. The Killing form\footnote{The Killing form is naturally defined on the Lie algebra of $G$ but its restriction to the Lie algebra $\mathfrak{h}$ of $H$ is positive-definite, so we may identify $\mathfrak{h} \cong \mathfrak{h}^*$.}
on $\Lambda_\R$ is denoted by
 $\langle \cdot, \cdot \rangle$.
For a weight $\alpha \in \Lambda$, we let $e^\alpha$ denote the corresponding multiplicative character $e^\alpha: B \to \C^*$. We let $\{\alpha_1, \ldots, \alpha_r\}$ be the ordered set of positive simple roots with respect to the choices $H \subset B \subset G$ and denote by $\alpha_i^\vee$ the 
corresponding coroots. Recall that a root $\alpha$ and its corresponding coroot $\alpha^\vee$ satisfies
\[
\alpha^\vee := \frac{2 \alpha}{\langle \alpha,\alpha \rangle}
\]
so in particular, $\langle \alpha,
\alpha^\vee \rangle = 2$ for any simple root $\alpha$.
Furthermore, for a simple root $\alpha$
let $s_\alpha:\Lambda\rightarrow \Lambda,
\lambda\mapsto\lambda-\langle\lambda,\al^{\vee}\rangle \al,$ denote the associated simple reflection; these generate the Weyl group $W$.
We let $\{\varpi_1, \ldots, \varpi_r\}$ denote the set of fundamental weights satisfying $\langle \varpi_i, \alpha^\vee_j \rangle = \delta_{i,j}$.
Finally, for
 a simple root $\alpha$, we write $P_\alpha := B \cup
  B s_\alpha B$ for the
  minimal parabolic subgroup containing $B$ associated to $\alpha$.

\begin{defn}
The {\em Bott-Samelson variety} $Z_{\mfi}$ corresponding to a word $\mfi=(i_1, \dots, i_n) \in \{1,2,\ldots,r\}^n$ is the quotient
\[Z_{\mfi}:=(P_{\beta_1}\times\cdots\times
P_{\beta_n})/B^n\]
where $\be_j = \alpha_{i_j}$ and $B^n$ acts on the right on $P_{\beta_1} \times \cdots \times
P_{\beta_n}$ by:
\[(p_1,\dots,p_n)\cdot
(b_1,\dots,b_n):=(p_1b_1,b_1^{-1}p_2b_2,\dots,b_{n-1}^{-1}p_nb_n).\]

\end{defn}
It is known that $Z_{\mfi}$ is a smooth projective algebraic variety of dimension
$n$. 
By convention, if $n=0$ and $\mfi$ is the empty word, we set
$Z_{\mfi}$ equal to a point. There is a natural morphism from $Z_{\mfi}$ to the flag variety $G/B$:
\begin{equation}\label{eq: def mu}
\mu:Z_{\mfi}\rightarrow G/B, [p_1,\dots,p_n]\mapsto p_1\dots p_nB.
\end{equation}
When the word $\mfi$ is reduced (that is, $s_{\be_1}\dots s_{\be_n}$ is a reduced word decomposition of an element in $W$), this morphism is birational to its image, which is the Schubert variety corresponding to  $s_{\be_1}\dots s_{\be_n}$.
In particular, when $s_{\be_1}\dots s_{\be_n}$ is the longest element  in the Weyl group, $\mu$ is a birational morphism from $Z_{\mfi}$ to $G/B$. \emph{For the remainder of this note, we assume that the word $\mfi$ is reduced.}

Recall that, in Step 1 of the construction of Newton-Okounkov bodies described above, we choose an embedding of the given projective variety $X$ into a projective space.  This choice corresponds to  a choice of a very ample line bundle   on the variety.  We now review known facts about line bundles on Bott-Samelson varieties. Let $Z_{\mfi}$ be a Bott-Samelson variety associated to a reduced word $\mfi = (i_1, \ldots, i_n)$ as above. Suppose given a sequence $\{\la_1,\dots,\la_n\}$ of weights
$\la_j \in \Lambda$. We let $\C^*_{(\la_1,\dots,\la_n)}$ denote the one-dimensional representation of $B^n$ defined by
\begin{equation}\label{def:rep}
(b_1,\dots,b_n)\cdot k:=e^{-\la_1}(b_1)\cdots e^{-\la_n}(b_n)k.
\end{equation}
We define
the line bundle $L_{\mfi}(\lambda_1, \ldots, \lambda_n)$ over
$Z_{\mfi}$ by 
\begin{equation}\label{eq:definition line bundle}
L_{\mfi}(\lambda_1, \ldots, \lambda_n):=(P_{\be_1}\times\cdots
P_{\be_n})\times_{B^n} \C^*_{(\la_1,\dots,\la_n)}
\end{equation}
where the equivalence relation is given by
 \[((p_1, \ldots, p_n) \cdot (b_1, \ldots, b_n),k)\sim ((p_1, \ldots,
 p_n), (b_1, \ldots, b_n) \cdot k)\]
 for $(p_1, \ldots, p_n) \in P_{\beta_1} \times\cdots\times
 P_{\beta_n}, (b_1, \ldots, b_n) \in
 B^n$, and $k\in\C$. The projection $L_{\mfi}(\lambda_1, \ldots, \lambda_n) \to Z_{\mfi}$
  is given by taking the first factor
 $[(p_1, \ldots, p_n, k)] \mapsto [(p_1, \ldots, p_n)] \in Z_{\mfi}$.

In what follows, we will frequently choose the weights $\lambda_j$ to be of a special form, as follows.
Specifically, suppose given a {\em multiplicity list}
$\mfm=(m_1,\dots,m_n) \in \Z_{\geq 0}^n$. Then we may define a
sequence of weights $\{\la_1, \ldots, \la_n\}$ associated to the
word $\mfi$ and the multiplicity list $\mfm$ by setting
\begin{equation}\label{def:lambda}
\la_1:=m_1\varpi_{\be_1},\dots, \la_n=m_n\varpi_{\be_n}.
\end{equation}
In this special case we will use the notation
\begin{equation}\label{eq:definition Lim}
L_{\mfi,\mfm} := L_{\mfi}(m_1 \varpi_{\beta_{1}},
\cdots, m_n \varpi_{\beta_{n}}).
\end{equation}

\begin{prop}[\protect\cite{Lauritzen-Thomsen}] \label{prop:line bundle}
Let $\mfi, Z_{\mfi}, \mfm, L_{\mfi,\mfm}$ be as above. The line bundle $L_{\mfi,\mfm}$ over $Z_{\mfi}$ is globally generated (respectively very ample) if $m_i\ge 0 (\text{respectively } m_i>0)$ for all $i=1,\dots, n$.
\end{prop}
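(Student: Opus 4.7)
The plan is to prove both assertions simultaneously by induction on the length $n$ of the word $\mfi$, exploiting the standard realization of Bott-Samelson varieties as iterated $\mathbb{P}^1$-bundles. The base case $n=1$ is immediate: $Z_{\mfi} \cong P_{\be_1}/B \cong \mathbb{P}^1$ and the line bundle $L_{\mfi,\mfm}$ associated with the character $e^{-m_1 \varpi_{\be_1}}$ of $B$ is isomorphic to $\mcO_{\mathbb{P}^1}(m_1)$, so the claim reduces to the classical facts about line bundles on the projective line.

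For the inductive step, set $\tilde{\mfi} := (i_1, \ldots, i_{n-1})$ and consider the forgetful projection $\pi \colon Z_{\mfi} \to Z_{\tilde{\mfi}}$ sending $[p_1, \ldots, p_n]$ to $[p_1, \ldots, p_{n-1}]$; this is a Zariski-locally trivial $\mathbb{P}^1$-bundle with fibre $P_{\be_n}/B \cong \mathbb{P}^1$. Unwinding the construction~\eqref{eq:definition line bundle} and using that the character $e^{-m_n \varpi_{\be_n}}$ of $B$ produces $\mcO_{\mathbb{P}^1}(m_n)$ on $P_{\be_n}/B$, one verifies that the restriction of $L_{\mfi,\mfm}$ to any fibre of $\pi$ is isomorphic to $\mcO_{\mathbb{P}^1}(m_n)$. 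Consequently, when $m_n \geq 0$ the higher direct image $R^1\pi_* L_{\mfi,\mfm}$ vanishes (by cohomology and base change, since $H^1(\mathbb{P}^1,\mcO(m_n))=0$), and $\pi_* L_{\mfi,\mfm}$ is a locally free sheaf of rank $m_n + 1$ on $Z_{\tilde{\mfi}}$. A direct computation using the $P_{\be_n}$-action on the last factor identifies this pushforward as an iterated extension whose successive quotients are Bott-Samelson line bundles of the form $L_{\tilde{\mfi}}(\tilde{\la}_1,\ldots,\tilde{\la}_{n-1})$ with each $\tilde{\la}_j$ a nonnegative integer combination of the fundamental weights $\varpi_{\be_1},\ldots,\varpi_{\be_{n-1}}$.

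Given these preparations, assertion (i) follows by combining fibrewise global generation of $\mcO_{\mathbb{P}^1}(m_n)$ (for $m_n \geq 0$) with global generation of $\pi_* L_{\mfi,\mfm}$ on $Z_{\tilde{\mfi}}$; the latter is obtained from the inductive hypothesis applied to each graded piece of the filtration, together with the fact that extensions of globally generated sheaves remain globally generated. For assertion (ii), the same strategy applies but one must additionally verify separation of points and tangent vectors: pairs of points lying in a common fibre of $\pi$ are separated by the fibrewise very ampleness of $\mcO_{\mathbb{P}^1}(m_n)$ when $m_n > 0$, while pairs in distinct fibres (and the corresponding horizontal tangent vectors) are separated via the inductive hypothesis applied to the pushforward on $Z_{\tilde{\mfi}}$. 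The main technical obstacle is making the filtration of $\pi_* L_{\mfi,\mfm}$ explicit enough that each graded piece is genuinely a Bott-Samelson line bundle with nonnegative (resp.\ positive) multiplicities, so that the inductive hypothesis applies; this computation is essentially the geometric avatar of the Demazure character formula, and is where one must carefully track how the weight $m_n\varpi_{\be_n}$ interacts with the previous factors under the $P_{\be_n}$-action.
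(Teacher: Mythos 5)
The paper does not prove this proposition; it is quoted directly from Lauritzen--Thomsen, so there is no internal argument to compare against. Judged on its own terms, your induction has a genuine gap at its crux. The pushforward $\pi_* L_{\mfi,\mfm}$ is the bundle on $Z_{\tilde{\mfi}}$ associated to the $B$-module $H^0(P_{\be_n}/B, \mathcal{L}_{m_n\varpi_{\be_n}})$ (twisted into the $(n-1)$-st slot), and its $B$-stable filtration has one-dimensional graded pieces whose weights are $m_n\varpi_{\be_n} - j\be_n$ for $j=0,\dots,m_n$. Hence the successive quotients are line bundles $L_{\tilde{\mfi}}(\la_1,\dots,\la_{n-2},\, \la_{n-1}+m_n\varpi_{\be_n}-j\be_n)$. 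These weights are \emph{not} nonnegative combinations of fundamental weights in general --- the coefficient of $\varpi_{\be_n}$ in $m_n\varpi_{\be_n}-j\be_n$ is $m_n-2j$, negative once $j>m_n/2$ --- and, more to the point, they are not of the special form $\tilde{m}_{n-1}\varpi_{\be_{n-1}}$ required by your inductive hypothesis, which only covers bundles $L_{\tilde{\mfi},\tilde{\mfm}}$. So the induction does not close as formulated; one would have to strengthen the statement to arbitrary weight sequences $(\la_1,\dots,\la_n)$ with the correct (and more delicate) positivity condition, which is essentially what Lauritzen--Thomsen actually do. In addition, even granting the filtration, ``extensions of globally generated sheaves are globally generated'' is fine, but the analogous step for very ampleness (separating points in distinct fibres via a filtered rank-$(m_n+1)$ pushforward) is not a standard closure property and would need a real argument.

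There is a much shorter correct route, whose ingredients the paper itself supplies in Section 3: $\phi: Z_{\mfi}\hookrightarrow G/\hat{P}_{\be_1}\times\cdots\times G/\hat{P}_{\be_n}$ is a closed immersion and $L_{\mfi,\mfm}\cong \phi^*\bigl(\mathcal{L}_{m_1\varpi_{\be_1}}\boxtimes\cdots\boxtimes \mathcal{L}_{m_n\varpi_{\be_n}}\bigr)$. Each factor $\mathcal{L}_{m_i\varpi_{\be_i}}$ is globally generated for $m_i\ge 0$ and very ample for $m_i>0$ by Borel--Weil, external tensor products preserve both properties, global generation is preserved under arbitrary pullback, and very ampleness is preserved under pullback along a closed immersion. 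I would recommend replacing the inductive argument with this one, or else restating the induction for general $L_{\mfi}(\la_1,\dots,\la_n)$.
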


The Borel subgroup acts on both $Z_{\mfi}$ and $L_{\mfi,\mfm}$ by left multiplication on the first factor.
Thus the space of global sections $H^0(Z_{\mfi},L_{\mfi,\mfm})$ is
naturally a $B$-module; these are called \textit{generalized Demazure
  modules} (cf. for instance \cite{LLM02}).

We now recall two constructions of
Newton-Okounkov bodies of Bott-Samelson varieties from
\cite{HY2,Fujita} which are, a priori, different.  We will later see that, under certain conditions, they actually coincide
(Proposition \ref{prop:equivalence}). This fact allows us to prove the main result of this note in Section~\ref{sec:main}.

Let $\mfi$ and $\mfm$ be as above. Let $R=R_{\mfi,\mfm}=\oplus_{d\ge 0} H^0(Z_{\mfi}, L_{\mfi,\mfm})^d$, the homogeneous coordinate ring of the image of $Z_{\mfi}$ under the Kodaira morphism corresponding to $L_{\mfi,\mfm}$. 
Part of the construction of a Newton-Okounkov body of $R$ is a choice of a valuation on $R$. In both \cite{HY2} and \cite{Fujita} the valuation is constructed using a system of local coordinates $(t_1,\dots, t_n)$ on $Z_{\mfi}$ near the point $[e,e,\dots,e]\in Z_{\mfi}$, where $e$ is the identity element of the group $G$. Specifically, the coordinate system is defined by the map 
\begin{equation}\label{eq:coords}
\C^n\hookrightarrow Z_{\mfi}, \quad (t_1,\dots,t_n)\mapsto [\exp(t_1F_{\be_1}),\dots,\exp(t_nF_{\be_n})]
\end{equation} 
where $F_{\be_i}$ is the Chevalley generator for the root subspace $\mkg_{-\be_i} (i=1,\dots, n)$.  Denote by $\mc{U}$ the image of $\C^n$ under this embedding.  Consider the following two valuations on the polynomial ring $\C[t_1,\dots,t_n]$:
\begin{itemize}
  \item $\nu_{\max}$: the highest-term valuation with respect to the lexicographic order with $t_1>\dots>t_n$ defined as follows. For a nonzero-polynomial $f\in \C[t_1,\dots,t_n]$, let $t_1^{a_1}\cdots t_n^{a_n}$ be the highest term of $f$. Then we define \[\nu_{\max}(f):= (-a_1,\dots,-a_n).\]
  \item $\nu_{\min^{op}}$: the lowest-term valuation with respect to the lexicographic order with $t_n>\dots>t_1$, defined as follows. For a nonzero-polynomial $f\in \C[t_1,\dots,t_n]$, let $t_1^{b_1}\cdots t_n^{b_n}$ be the lowest term of $f$. Then we define \[\nu_{\min^{op}}(f):= (b_1,\dots,b_n).\]
\end{itemize}

To define a valuation on sections of a line bundle, we need a choice of a normalization, i.e., a base section which is nowhere vanishing on the local coordinate system $\mc{U}$. We now construct our choice of base section. 
For a simple root $\beta$, let $\hat{P}_{\beta}$ be the maximal parabolic subgroup corresponding to $\beta$. It is well-known that $H^0(G/\hat{P}_{\beta}, \mathcal{L}_{\varpi_{\beta}}) \cong V^*_{\varpi_{\beta}}$ as $G$-representations, where 
\begin{equation}\label{eq: L lambda}
\mathcal{L}_{\varpi_{\beta}} := (G \times_{\hat{P}_{\beta}} \C_{\varpi_{\beta}} \to G/\hat{P}_{\beta})
\end{equation}
is the $G$-equivariant line bundle associated to the weight $\varpi_{\beta}$ and $V_{\varpi_{\beta}}$ is the irreducible $G$-representation of highest weight $\varpi_{\beta}$. Moreover, up to non-zero scalars, there is a unique non-zero lowest-weight vector in any irreducible $G$-representation. These representations are related to the Bott-Samelson variety $Z_{\mfi}$ through the following closed immersion of $Z_{\mfi}$ into a product of Grassmann varieties: 
\[\phi: Z_{\mfi}\rightarrow G/\hat{P}_{\be_1}\times\cdots \times G/\hat{P}_{\be_n}, [p_1,\dots,p_n]\mapsto ([p_1],[p_1p_2],\dots, [p_1p_2\cdots p_n])\]
and the projection 
\[
\pi_i:G/\hat{P}_{\be_1}\times\cdots \times G/\hat{P}_{\be_n}\rightarrow G/\hat{P}_{\be_i}
\]
to the $i$-th factor of this product. It is known that the line bundle $L_{\mfi, \mfm}$ defined above can also be described in terms of the morphisms $\phi$ and $\pi_i$, namely, 
\[
L_{\mfi, \mfm} \cong \phi^* ( \otimes_{i=1}^n \pi^* (\mathcal{L}_{m_i \varpi_{\beta_i}})) \cong \phi^* (\otimes_{i=1}^n 
\pi^* (\mathcal{L}_{\varpi_{\beta_i}}^{\otimes m_i})) \cong \otimes_{i=1}^n \left( \phi^* \pi^*(\mathcal{L}_{\varpi_{\beta_i}}^{\otimes m_i}) \right).
\]
For the remainder of this discussion, and for each simple root $\beta_i$, we fix a choice 
\[
0 \neq \tau_i  \in H^0(G/\hat{P}_{\beta_i} , \mathcal{L}_{\varpi_{\beta_i}}) \cong V^*_{\varpi_{\beta_i}}
\]
of non-zero lowest weight vector (equivalently, $B^-$-eigenvector).  This choice is unique up to scalars, and the properties of the sections $\tau_i$ which are required for later arguments are independent of this choice. In what follows, by slight abuse of notation, we denote by $\tau_i^{\otimes m_i}$ the element of $H^0(G/\hat{P}_{\beta_i}, \mathcal{L}_{m_i \varpi_{\beta_i}})$ which is the image of $\tau_i^{\otimes m_i} \in H^0(G/\hat{P}_{\beta_i}, \mathcal{L}_{\varpi_{\beta_i}})^{\otimes m_i}$ under the natural ($G$-equivariant) map 
\[
H^0(G/\hat{P}_{\beta_i}, \mathcal{L}_{\varpi_{\beta_i}})^{\otimes m_i} \to H^0(G/\hat{P}_{\beta_i}, \mathcal{L}_{m_i \varpi_{\beta_i}}).
\]
Note that by standard representation theory, $\tau_i^{\otimes m_i}$ is a lowest-weight vector of 
$H^0(G/\hat{P}_{\beta_i}, \mathcal{L}_{m_i \varpi_{\beta_i}}) \cong V^*_{m_i \varpi_{\beta_i}}$. Given a reduced word $\mfi = (i_1,\ldots,i_n)$ and multiplicity list $\mfm=(m_1,\ldots,m_n) \in \Z^n_{\geq 0}$ we may now define a section 
$\tau_{\mathbf{i}, \mathbf{m}}$ by 
\begin{equation}\label{eq: def tau}
\tau_{\mathbf{i}, \mathbf{m}} := \otimes_{i=1}^n \left( \phi^* \pi^* (\tau_i^{\otimes m_i} ) \right)
\end{equation}
where, once again by abuse of notation, we have identified the RHS with its image under the map 
\[
\otimes_{i=1}^n H^0(Z_{\mathbf{i}}, \phi^* \pi^* \mathcal{L}_{\varpi_{\beta_i}}^{\otimes m_i} ) 
\to H^0(Z_{\mathbf{i}}, \otimes_{i=1}^n \left( \phi^* \pi^*(\mathcal{L}_{\varpi_{\beta_i}}^{\otimes m_i}) \right) ).
\]
The following is immediate.

\begin{lem}\label{lem: tau is additive}
Let $\mathbf{m}, \mathbf{m'}, \mathbf{m}'' \in \Z^n_{\geq 0}$ and suppose $\mathbf{m} = \mathbf{m}' + \mathbf{m}''$. Then $L_{\mfi, \mfm} \cong L_{\mfi, \mfm'} \otimes L_{\mfi, \mfm''}$ and $\tau_{\mathbf{i}, \mathbf{m}} = \tau_{\mathbf{i}, \mathbf{m}'} \otimes \tau_{\mathbf{i}, \mathbf{m}''}$. \end{lem}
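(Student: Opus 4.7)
The claim is essentially a formal consequence of unpacking the definitions in equations \eqref{eq:definition Lim} and \eqref{eq: def tau}, together with the elementary identity $\mathcal{L}^{\otimes (m'+m'')} \cong \mathcal{L}^{\otimes m'} \otimes \mathcal{L}^{\otimes m''}$ for any line bundle $\mathcal{L}$ and non-negative integers $m', m''$. My plan is to handle the two claims separately, in order.

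For the line bundle isomorphism, I would start from the description
\[
L_{\mfi, \mfm} \cong \otimes_{i=1}^n \left( \phi^* \pi_i^* (\mcL_{\varpi_{\be_i}}^{\otimes m_i}) \right)
\]
recalled just above the statement of Lemma~\ref{lem: tau is additive}. Under the hypothesis $m_i = m_i' + m_i''$ for each $i$, the tensor power splits as $\mcL_{\varpi_{\be_i}}^{\otimes m_i} \cong \mcL_{\varpi_{\be_i}}^{\otimes m_i'} \otimes \mcL_{\varpi_{\be_i}}^{\otimes m_i''}$. Applying the pullbacks $\phi^*$ and $\pi_i^*$ (both of which commute with tensor products) and then rearranging the $n$-fold tensor product over $i$, one immediately obtains
\[
L_{\mfi, \mfm} \cong \bigotimes_{i=1}^n \phi^*\pi_i^*(\mcL_{\varpi_{\be_i}}^{\otimes m_i'}) \otimes \bigotimes_{i=1}^n \phi^*\pi_i^*(\mcL_{\varpi_{\be_i}}^{\otimes m_i''}) \cong L_{\mfi, \mfm'} \otimes L_{\mfi, \mfm''}.
\]

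For the section identity, the key point is that under the natural multiplication map $H^0(G/\hat{P}_{\be_i}, \mcL_{\varpi_{\be_i}})^{\otimes m_i} \to H^0(G/\hat{P}_{\be_i}, \mcL_{m_i \varpi_{\be_i}})$ recalled before \eqref{eq: def tau}, and under its obvious analogue
\[
H^0(G/\hat{P}_{\be_i}, \mcL_{m_i' \varpi_{\be_i}}) \otimes H^0(G/\hat{P}_{\be_i}, \mcL_{m_i'' \varpi_{\be_i}}) \to H^0(G/\hat{P}_{\be_i}, \mcL_{m_i \varpi_{\be_i}}),
\]
the vector $\tau_i^{\otimes m_i}$ is by definition sent to (and identified with) $\tau_i^{\otimes m_i'} \otimes \tau_i^{\otimes m_i''}$. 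Pulling back along $\phi^* \pi_i^*$ and then tensoring the resulting identities over $i$ from $1$ to $n$ yields $\tau_{\mfi, \mfm} = \tau_{\mfi, \mfm'} \otimes \tau_{\mfi, \mfm''}$ under the identification of line bundles established in the previous paragraph.

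There is no real obstacle here: the only slightly annoying point is the bookkeeping associated with the abuse of notation mentioned in the text, namely that each $\tau_i^{\otimes m_i}$ denotes both an element of a tensor power of sections and its image in the space of sections of a higher tensor power of the line bundle, and similarly after applying $\phi^* \pi_i^*$. Once these identifications are made explicit, both statements reduce to the naturality (with respect to tensor product and pullback) of these multiplication maps.
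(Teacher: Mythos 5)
Your argument is correct and is precisely the definition-unpacking that the paper has in mind: the text offers no proof at all, simply declaring the lemma ``immediate'' from the definitions in \eqref{eq:definition Lim} and \eqref{eq: def tau}. Your write-up just makes explicit the tensor-power splitting and the compatibility of the multiplication maps with pullback that underlie that claim.
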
 

The following is also known (see for example \cite[Section 2.3]{Fujita}). 

\begin{lem}\label{lem: tau nonvanishing}
The section $\tau_{\mfi, \mfm}$ never vanishes on the coordinate neighborhood $\mc{U} \cong \C^n$ of~\eqref{eq:coords}. 
\end{lem}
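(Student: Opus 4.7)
The plan is to reduce to a pointwise nonvanishing statement for each tensor factor, and then exploit the fact that a lowest-weight section on $G/\hat P_{\beta_i}$ is nonvanishing precisely on the open $B^{-}$-orbit.

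First, by the definition~\eqref{eq: def tau} together with the multiplicativity half of Lemma~\ref{lem: tau is additive}, we have
\[
\tau_{\mfi,\mfm} \;=\; \bigotimes_{i=1}^{n} \phi^{*}\pi_{i}^{*}\bigl(\tau_i^{\otimes m_i}\bigr).
\]
A tensor product of sections of line bundles vanishes at a point iff one of its factors does, so it suffices to verify that, for each $i$ with $m_i \ge 1$, the pullback $\phi^{*}\pi_{i}^{*}\tau_i$ is nonvanishing on $\mathcal U$; the factors with $m_i = 0$ contribute the canonical nowhere-zero section of the trivial bundle.

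Next, I would unwind the composition $\pi_i \circ \phi$ on the chart~\eqref{eq:coords}. For $(t_1,\dots,t_n)\in \C^n \cong \mathcal U$, the image is the coset $[u_i(t)] \in G/\hat P_{\beta_i}$ with $u_i(t) = \exp(t_1 F_{\beta_1}) \exp(t_2 F_{\beta_2}) \cdots \exp(t_i F_{\beta_i})$. Each factor $\exp(t_j F_{\beta_j})$ lies in the root subgroup $U_{-\beta_j} \subset U^{-}$, so $u_i(t) \in U^{-}$ and hence $\pi_i(\phi(\mathcal U)) \subseteq U^{-}\cdot [e\hat P_{\beta_i}]$, the open $B^{-}$-Bruhat cell in $G/\hat P_{\beta_i}$.

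Finally, I would argue that $\tau_i$ is nowhere zero on $U^{-}\cdot [e\hat P_{\beta_i}]$. Realizing sections of $\mathcal L_{\varpi_{\beta_i}}$ as regular functions $f:G\to\C$ satisfying $f(gp) = e^{-\varpi_{\beta_i}}(p) f(g)$ for $p \in \hat P_{\beta_i}$, the hypothesis that $\tau_i$ is a lowest-weight (equivalently $B^{-}$-eigen) vector gives a left transformation law $f(b^{-}g) = \chi(b^{-}) f(g)$ for some character $\chi$ of $B^{-}$ (necessarily trivial on $U^{-}$). Thus the zero locus $\{f=0\}$ is both $B^{-}$-invariant on the left and $\hat P_{\beta_i}$-invariant on the right, hence a union of $B^{-}$--$\hat P_{\beta_i}$ double cosets. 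Since $\tau_i \ne 0$, this zero locus must be disjoint from the unique open such double coset $U^{-}\cdot [e\hat P_{\beta_i}]$. In particular $\tau_i([u_i(t)])\ne 0$ for all $t$, so $\phi^{*}\pi_{i}^{*}\tau_i$ is nowhere zero on $\mathcal U$, and the lemma follows.

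The technical hinge is the last step: identifying the zero scheme of a lowest-weight section on $G/\hat P_{\beta_i}$ with the union of the Schubert divisors complementary to the open $B^{-}$-orbit. Once conventions for the left/right group actions and for the identification $H^{0}(G/\hat P_{\beta_i},\mathcal L_{\varpi_{\beta_i}}) \cong V^{*}_{\varpi_{\beta_i}}$ are fixed, this is essentially the Borel--Weil description together with the fact that the $B^{-}$-semi-invariant line in $V^{*}_{\varpi_{\beta_i}}$ is one-dimensional; the rest of the argument is formal bookkeeping.
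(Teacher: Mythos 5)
Your argument is correct, and it is the standard one: the paper itself offers no proof of this lemma, deferring to \cite[Section 2.3]{Fujita}, where the nonvanishing is obtained in essentially the same way (the chart~\eqref{eq:coords} lands in the open $B^-$-orbit of each $G/\hat{P}_{\beta_i}$, and a nonzero $B^-$-eigensection has $B^-$-stable zero locus, hence cannot meet that orbit). The reduction to a single tensor factor via the one-dimensionality of line-bundle fibers and the computation $\pi_i(\phi(t))=[\exp(t_1F_{\beta_1})\cdots\exp(t_iF_{\beta_i})]\in U^-\cdot[e\hat{P}_{\beta_i}]$ are both sound, so this fills in the cited details faithfully.
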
 

Now, given any $0 \neq \sigma \in H^0(Z_{\mfi}, L_{\mfi, \mfm}) = V$ we may define the valuation 
$v_{min^{op}}$ (respectively $v_{max}$) on $V$ by $v_{min^{op}}(\sigma) := \nu_{min^{op}}(\frac{\sigma}{\tau_{\mfi, \mfm}})$ (respectively $v_{max}(\sigma) := \nu_{max}(\frac{\sigma}{\tau_{\mfi, \mfm}})$). This also naturally extends to $R_k$ for $k>1$ by sending $\sigma \in H^0(Z_{\mfi}, L_{\mfi, \mfm}^{\otimes k}) \mapsto \nu_{min^{op}}(\frac{\sigma}{\tau_{\mfi,\mfm}^{\otimes k}})$ (and similarly for $v_{max}$).

The Newton-Okounkov bodies $\Delta(Z_{\mfi}, L_{\mfi, \mfm}, v_{max})$ of Bott-Samelson varieties $Z_{\mfi}$ with respect to the line bundles $L_{\mfi, \mfm}$ and the valuation 
$v_{max}$ are computed in \cite{Fujita} and are denoted $-\Delta_{\mfi,\mfm}$. On the other hand, under a technical hypothesis, the Newton-Okounkov bodies $\Delta(Z_{\mfi}, L_{\mfi,\mfm}, v_{{\min}^{op}})$ with respect to $v_{\min^{op}}$ are found in \cite{HY2} and are denoted by $P_{\mfi,\mfm}^{op}$. In both cases, these Newton-Okounkov bodies are convex polytopes, defined by a concrete set of inequalities, as we now describe.

We begin with the polytope $P_{\mfi,\mfm}$. (The polytope $P_{\mfi,\mfm}^{op}$ is obtained from $P_{\mfi,\mfm}$ by reversing the order of the coordinates $(x_1,x_2,\ldots,x_n) \mapsto (x_n, \ldots, x_2, x_1)$.) 
Let $\mfi=(i_1,\ldots,i_n)$ be a reduced word and $\mfm=(m_1,\ldots,m_n) \in \Z^n_{\geq 0}$ a multiplicity list as above. We define a set of functions $A_j=A_j(x_1,\ldots,x_n)$ associated to $\mfi,\mfm$ as follows (the functions depend on $\mfi, \mfm$ but for simplicity we suppress it from the notation): 
\begin{equation}\label{def: As}
\begin{array}{l}
A_n:= m_n,\\
A_{n-1}(x_n):= \lee m_{n-1}\varpi_{\be_{n-1}}+m_n\varpi_{\be_n}-x_n\be_n,\be_{n-1}^{\vee}\ree,\\
A_{n-2}(x_{n-1},x_n):=\lee m_{n-2}\varpi_{\be_{n-2}}+m_{n-1}\varpi_{\be_{n-1}}+m_{n}\varpi_{\be_n}-x_{n-1}\be_{n-1}-x_n\be_n,\be_{n-2}^{\vee}\ree\\
 \quad \vdots\\
A_1(x_2,\dots,x_n):=\lee m_1\varpi_{\be_1}+m_2\varpi_{\be_2}+\cdots + m_n\varpi_{\be_n}-x_2\be_2-\cdots-x_n\be_n,\be_1^{\vee}\ree
\end{array}
\end{equation}
Note the function $A_j$ in fact depends only on the variables $x_n, x_{n-1}, \ldots, x_{j+1}$, and the notation used above reflects this. We can now define the polytopes in question. 

\begin{defn}\label{def:polytope}
 The polytope $P_{\mfi,\mfm}$ is the
set of all points $\mf{x}=(x_1,\dots,x_n) \in \R^n$ satisfying  the following inequalities:
\[0\le x_j\le A_j(x_{j+1},\dots,x_n), \quad1\le j\le n.\]
\end{defn}

\begin{rem}\label{remark:toric}
 The polytope $P_{\mfi,\mfm}$ has appeared previously in the
  literature and has connections to toric geometry and representation theory. Specifically, under a hypothesis on $\mfi$ and $\mfm$
  which we call ``condition \textbf{(P)}'' (see Definition~\ref{definition:conditionP} below), we show
  in \cite{HY1} that $P_{\mfi,\mfm}$ is exactly a so-called
  \textit{Grossberg-Karshon twisted cube}. These twisted cubes were introduced in
  \cite{Grossberg-Karshon1994} in connection with Bott towers and
  character formulae for irreducible $G$-representations. The arguments in \cite{HY1} use a certain torus-invariant
  divisor in a toric variety associated to Bott-Samelson varieties
  studied by Pasquier \cite{Pasquier}.
\end{rem}

Now we introduce the condition {\bf (P)} which is a
 technical hypothesis on the word and the multiplicity
list.

\begin{defn}\label{definition:conditionP}
We say that the pair $(\mfi, \mfm)$
  \textbf{satisfies condition (P)} if
  \begin{enumerate}
  \item[(P-n)] $m_n \geq 0$
  \end{enumerate}
  and for every integer $k$ with $1 \leq k \leq n-1$, the following statement, which we refer to as condition (P-k),
    holds:
\begin{enumerate}
\item[(P-k)]
      if $(x_{k+1}, \ldots, x_n)$ satisfies
     \begin{equation*}
      \begin{array}{ccccl}
      0& \leq& x_n& \leq& A_n \\
      0& \leq &x_{n-1}& \leq& A_{n-1}(x_n) \\
    && \vdots&& \\
     0 &\leq& x_{k+1}& \leq& A_{k+1}(x_{k+2}, \ldots, x_n), \\
       \end{array}
       \end{equation*}
   then \[A_k(x_{k+1},\ldots, x_n) \geq 0.\] \end{enumerate}
In particular, condition \textbf{(P)} holds if and only if the conditions (P-1) through (P-n) all hold.
\end{defn}

\begin{eg}\label{eg:1} Let $G=\SL_3$,  $\mfi=(1,2,1)$ and $\mfm=(0,1,1)$. Then for $x_3=A_3=1$ and $x_2=0\le A_2(x_3)$, $A_1(x_2,x_3)=m_1+m_3+x_2-2x_3=-1<0$. Thus $(\mfi,\mfm)$ does not satisfy the condition {\bf (P)}. See also Example \ref{eg:2}.
\end{eg}

\begin{rem}
  The condition \textbf{(P)} is rather restrictive. On the other hand,
  for a given word $\mfi$, it is not
  difficult to explicitly construct multiplicity lists $\mfm$ such that $(\mfi,\mfm)$ satisfying  condition
  \textbf{(P)}, as described in Proposition \ref{prop: nice m}.
\end{rem}

The following theorem shows that the polytopes $P_{\mfi,\mfm}$ (up to reversal of coordinates) are Newton-Okounkov bodies of Bott-Samelson varieties.

\begin{thm}[\protect{\cite[Theorem 3.4]{HY2} \label{thm:HY}}]

Suppose that $(\mfi,\mfm)$ satisfies the condition {\bf (P)}. Then the semigroup $S(Z_{\mfi},L_{\mfi,\mfm},v_{\min^{op}})$ is finitely generated and the corresponding Newton-Okounkov body is equal to \[P_{\mfi,\mfm}^{op}:=\{(x_1,\dots,x_n)\in \R^n| (x_n,\dots,x_1)\in P_{\mfi,\mfm}\}.\]

\end{thm}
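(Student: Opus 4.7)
The plan is to show, under condition \textbf{(P)}, that for every $k \geq 1$ the image $v_{\min^{op}}(R_k \setminus \{0\})$ coincides with the set of lattice points of $k \cdot P_{\mfi,\mfm}^{op}$, where $R_k = H^0(Z_{\mfi}, L_{\mfi,\mfm}^{\otimes k})$. Finite generation of the semigroup $S(Z_{\mfi},L_{\mfi,\mfm},v_{\min^{op}})$ and the identification of the Newton-Okounkov body with $P_{\mfi,\mfm}^{op}$ are then immediate from the fact (proved in \cite{HY1}) that $P_{\mfi,\mfm}$ is a lattice polytope under \textbf{(P)}.

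First, I would use Lemma~\ref{lem: tau nonvanishing} to embed every nonzero section $\sigma \in R_k$ into $\C[t_1,\ldots,t_n]$ via $\sigma \mapsto \sigma/\tau_{\mfi,\mfm}^{\otimes k}$, so that $v_{\min^{op}}(\sigma)$ reads off the exponent vector of the lexicographically smallest monomial (with $t_n > \cdots > t_1$). I would then exploit the iterated $\Pro^1$-bundle structure of $Z_{\mfi}$: the projection $\pi_n : Z_{\mfi} \to Z_{\mfi'}$ onto the first $n-1$ factors, where $\mfi' = (i_1,\ldots,i_{n-1})$, is a $\Pro^1$-bundle in which the fiber coordinate near $[e,\ldots,e]$ is exactly $t_n$. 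Pushing $L_{\mfi,\mfm}^{\otimes k}$ forward along $\pi_n$ splits its sections by $t_n$-degree, forcing $0 \leq b_n \leq k m_n = k A_n$; fixing $b_n$, the corresponding summand may be identified with sections on $Z_{\mfi'}$ of a modified line bundle whose weight list is obtained from $\mfm$ by the shift dictated by expanding $-b_n\beta_n$ in the basis of fundamental weights --- this is precisely the combinatorial content of the function $A_{n-1}(x_n)$ appearing in \eqref{def: As}. Iterating this descent yields exactly the nested inequalities $0\le b_j\le kA_j(b_{j+1},\ldots,b_n)$ defining $kP_{\mfi,\mfm}\cap\Z^n$, which after reversing the order of coordinates becomes $kP_{\mfi,\mfm}^{op}\cap\Z^n$.

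The main obstacle is that each inductive step only produces a well-behaved (globally generated) line bundle on the next Bott-Samelson factor when the modified weight list remains non-negative on the relevant slab --- and this non-negativity is \emph{precisely} condition \textbf{(P)}. Under \textbf{(P)}, the polytope $P_{\mfi,\mfm}$ coincides with a Grossberg-Karshon twisted cube in its ``untwisted'' regime, so by \cite{HY1} (via Pasquier's toric model) every prescribed lattice point of $kP_{\mfi,\mfm}^{op}$ arises as the lowest-term exponent of an explicit section of $L_{\mfi,\mfm}^{\otimes k}$ (surjectivity onto lattice points), while a weight-count in the generalized Demazure module $R_k$ rules out any lowest term outside the polytope (the reverse inclusion). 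Distinct lowest monomials produce linearly independent sections, giving one-dimensional leaves on each $R_k$, and since $P_{\mfi,\mfm}^{op}$ is a lattice polytope the graded semigroup is finitely generated; the Newton-Okounkov body, being the closure of the convex hull of the normalized semigroup, is then exactly $P_{\mfi,\mfm}^{op}$.
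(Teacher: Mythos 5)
This theorem is not proved in the present paper --- it is imported verbatim from \cite[Theorem 3.4]{HY2} --- and your reconstruction follows essentially the same route as that source: the inductive $\Pro^1$-bundle descent producing the inequalities $0\le b_j\le kA_j$, the use of condition \textbf{(P)} (equivalently, basepoint-freeness of Pasquier's divisor $D_{\mfi,\mfm}$ and the untwisted Grossberg--Karshon cube of \cite{HY1}) to get the matching dimension count $\dim R_k=\#(kP_{\mfi,\mfm}\cap\Z^n)$, and finite generation from the lattice-polytope property. The only imprecision worth noting is that one-dimensional leaves do not follow from ``distinct lowest monomials give independent sections'' (that is the converse direction); rather, they are automatic for a lowest-term valuation in local coordinates, or follow from the equality of cardinalities produced by your counting argument.
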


We now describe the polytope $\De_{\mfi,\mfm}$ which appears in \cite{Fujita} and which is called the {\em generalized string polytope} \cite[\S2.3]{Fujita}. Moreover, we compare  $\De_{\mfi,\mfm}$ with the polytope $P_{\mfi,\mfm}$ described above and show that, under the condition (P), these polytopes coincide. It turns out that this is a simple consequence of the general fact that the volume of a Newton-Okounkov body (for a fixed line bundle) is independent of the choice of valuation.

Given an $\mf{x}=(x_1,\dots, x_n)\in\Z_{\ge 0}^n$, we inductively define a sequence $\mf{x}^{(n)},\dots,\mf{x}^{(1)}$ with $\mf{x}^{(k)}\in\Z^k\quad (k=1,\dots,n)$ as follows. We define 
$\mf{x}^{(n)}=\mf{x},$ and then define $\mf{x}^{(k-1)}=(x_1^{(k-1)},\dots,x_{k-1}^{(k-1)})$ by the rule 
\[
x_i^{(k-1)}:=\min\{x_i^{(k)}, \Psi^{(k)}(i)\}\quad (i=1,\dots, k-1)
\] 
where $\Psi^{(k)}:\{1,\dots, k-1\}\rightarrow\Z$ is the function defined as follows. 
For $i<j\le k$ we define  
\[s(i,j,k):=x^{(k)}_{j}-\sum_{i<s\le j }  \lee\be_s, \be_k^{\vee}\ree x_s^{(k)}+\sum_{i\le s<j} \de_{i_k,i_s}m_s\] 
and then define 
\[\Psi^{(k)}(i):=\left\{\begin{array}{lc}\max\{s(i,j,k) \hspace{2mm} | \hspace{2mm} i<j\le k, \be_j=\be_k\}, & \text{if }\be_i=\be_k; \\
x_i^{(k)},&\text{ otherwise.}
\end{array}\right.\]
We have the following. 

\begin{defn}\label{def: string polytope}
The polytope $\De_{\mfi,\mfm}$  is the set of all points $\mf{x}=(x_1,\dots,x_n) \in \R^n$  satisfying the following inequalities: 
\[\Psi^{(k)}(i)\ge 0, \quad 2\le k\le n,\quad 1\le i\le k-1;\]
\[0\le x_j\le A_j(x_{j+1},\dots,x_n), \quad1\le j\le n.\]
\end{defn} 

\begin{rem}\label{rem: string}
For $\mfi$ a reduced word and $\mfm=\mfm(\la)$ for a dominant weight $\lambda$, then the above polytope is the usual string polytope associated to the Weyl group element $w$ corresponding to $\mfi$ and the dominant weight $\la$ \cite[Remark 5.11]{Fujita}. 
\end{rem} 

\begin{thm}\label{thm:Fujita}\cite[Theorem 3.1, Corollary 3.2]{Fujita} The semigroup $S(Z_{\mfi}, L_{\mfi,\mfm}, \nu_{\max})$ is finitely generated and
the corresponding Newton-Okounkov body is equal to
\[-\De_{\mfi,\mfm}:=\{(x_1,\dots,x_n)\in \R^n: (-x_1,\dots,-x_n)\in \De_{\mfi,\mfm}\}.\]
\end{thm}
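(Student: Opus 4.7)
The plan is to establish Theorem~\ref{thm:Fujita} by induction on the length $n$ of the reduced word $\mfi$, exploiting the iterated $\mathbb{P}^1$-bundle structure of $Z_{\mfi}$. The base case $n=1$ is essentially classical: $Z_{\mfi} \cong P_{\beta_1}/B \cong \mathbb{P}^1$ and $L_{\mfi,\mfm} \cong \mcO(m_1)$, so $\sigma/\tau_{\mfi,\mfm}$ is a polynomial of degree at most $m_1$ in $t_1$; the highest-term valuation realizes exactly the integers $-x_1$ with $0 \leq x_1 \leq m_1 = A_1$, matching $-\De_{\mfi,\mfm}$.

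For the inductive step I would use the smooth $\mathbb{P}^1$-fibration $Z_{\mfi} \to Z_{\mfi''}$ with $\mfi'' = (i_2,\dots,i_n)$, whose fiber over the identity point is parametrized by $t_1$. Pushing forward $L_{\mfi,\mfm}$ and using standard facts about Demazure-type modules, one decomposes $H^0(Z_{\mfi}, L_{\mfi,\mfm})$ as an iterated application of a Demazure operator to $H^0(Z_{\mfi''}, L_{\mfi'',\mfm''})$ for an appropriate shifted multiplicity list $\mfm''$. The key computation is to extract the highest power of $t_1$ in $\sigma/\tau_{\mfi,\mfm}$: by fiberwise analysis of the $\mathbb{P}^1$-bundle, this exponent must lie in the interval $[0, A_1(x_2,\dots,x_n)]$, where $A_1$ is exactly the weight-pairing appearing in~\eqref{def: As}. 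The coefficient of this top power is then a section on $Z_{\mfi''}$, to which the inductive hypothesis applies and contributes $-x_2,\dots,-x_n$ in the remaining coordinates.

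The hardest step will be producing the recursive inequalities $\Psi^{(k)}(i)\ge 0$. These arise because the coordinates~\eqref{eq:coords} are obtained by a specific ordered product of one-parameter subgroups $\exp(t_j F_{\be_j})$; when one attempts to rewrite a section's expansion so that the leading term in $t_1$ (then $t_2$, etc.) can be read off from data on the base $Z_{\mfi''}$, the Chevalley commutation relations among the $F_{\be_j}$ force corrections involving $\lee\be_s,\be_k^\vee\ree$ and Kronecker terms $\de_{i_k,i_s}$. These corrections produce the "crystal-like" expressions $s(i,j,k)$ and the $\min/\max$ structure defining $\Psi^{(k)}$. Matching the algebraic output of this recursion with the combinatorial formula of Definition~\ref{def: string polytope} is the technical heart of the argument, and one would organize it by a careful bookkeeping of degrees of $t_1$ in successive rewrites, verifying that the required inequality at level $k$ matches the inequality produced one step earlier in the valuation computation.

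Finally, once the image of $v_{\max}$ has been identified degree-by-degree with the lattice points of the cone over $\{1\}\times (-\De_{\mfi,\mfm})$, finite generation of the semigroup $S(Z_{\mfi}, L_{\mfi,\mfm}, \nu_{\max})$ follows from the fact that $-\De_{\mfi,\mfm}$ is a rational polytope defined by explicit affine inequalities, and the Newton-Okounkov body conclusion is then immediate from the definitions in Steps~5--7 of Section~\ref{sec:background}. In particular, the one-dimensional leaves property, which is needed to ensure the semigroup records the full linear-algebraic information of $H^0$, would be verified by checking that the leading monomials produced by the recursion are pairwise distinct — a consequence of the lex order choice combined with the recursive description above.
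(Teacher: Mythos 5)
First, a point of orientation: the paper does not prove this statement at all --- it is imported verbatim from Fujita \cite[Theorem 3.1, Corollary 3.2]{Fujita}, so there is no ``paper's proof'' to compare against. Judged on its own merits, your outline has two genuine gaps. The first is geometric: there is no morphism $Z_{\mfi} \to Z_{\mfi''}$ for $\mfi''=(i_2,\dots,i_n)$ obtained by forgetting the first factor. The twisted $B^n$-action $(p_1,\dots,p_n)\cdot(b_1,\dots,b_n)=(p_1b_1,b_1^{-1}p_2b_2,\dots)$ means $[p_2,\dots,p_n]$ is only well defined up to the residual left $b_1^{-1}$-translation; the standard $\mathbb{P}^1$-bundle structure on Bott--Samelson varieties forgets the \emph{last} letter, and what one actually has in the direction you want is a fibration $Z_{\mfi}\to P_{\beta_1}/B\cong\mathbb{P}^1$, $[p_1,\dots,p_n]\mapsto p_1B$, with \emph{fiber} $Z_{\mfi''}$ --- i.e.\ the roles of base and fiber are reversed relative to your setup. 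Since $\nu_{\max}$ is lexicographic with $t_1$ first, your induction is forced to extract $t_1$ first, and the ``coefficient of the top power of $t_1$'' is then a section on the fiber over $t_1=0$, not canonically a section on a base copy of $Z_{\mfi''}$; identifying which sections arise this way is exactly where a Demazure-operator / $\mathrm{SL}_2$-representation-theoretic input is needed, and the outline does not supply it.

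The second and more serious gap is that the technical heart --- producing the inequalities $\Psi^{(k)}(i)\ge 0$ of Definition~\ref{def: string polytope} and proving that the image of $v_{\max}$ in each degree is \emph{exactly} the set of lattice points of $-\De_{\mfi,d\mfm}$ (not merely contained in it) --- is acknowledged but not carried out; ``careful bookkeeping of degrees of $t_1$ in successive rewrites'' via Chevalley commutation relations is a placeholder for the theorem rather than an argument. In Fujita's actual proof these inequalities do not come from commutator calculus: they come from the characterization of the generalized Demazure crystal inside the tensor product $\mathcal{B}(m_1\varpi_{\beta_1})\otimes\cdots\otimes\mathcal{B}(m_n\varpi_{\beta_n})$ via string data and the compatibility of $v_{\max}$ with the upper global (dual canonical) basis; the $\min/\max$ structure of $\Psi^{(k)}$ is the tensor-product rule for crystals in disguise. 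Without that input (or an equivalent substitute such as standard monomial theory for generalized Demazure modules), your induction can at best show the valuation image is \emph{contained} in a polytope cut out by the $A_j$ inequalities, which would give $P_{\mfi,\mfm}$ rather than $\De_{\mfi,\mfm}$ and would not suffice: the whole point of Proposition~\ref{prop:equivalence} in this paper is that these two polytopes differ in general and coincide only under condition \textbf{(P)}. Your final paragraph is fine conditionally --- once the degree-$d$ image is identified with the lattice points of $-d\De_{\mfi,\mfm}$, finite generation does follow from Gordan's lemma --- but that identification is the entire content of the theorem.
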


It is immediate from the definitions that the polytope
$\De_{\mfi,\mfm}$ is a subset of the polytope $P_{\mfi,\mfm}$. In fact, under the condition {\bf (P)}, more is true, as we record in the following proposition. 

  \begin{prop} \label{prop:equivalence} 
  Suppose $(\mfi, \mfm)$ satisfies the condition {\bf (P)}. Then the  polytope $\De_{\mfi,\mfm}$ is equal to the polytope $P_{\mfi, \mfm}$.
\end{prop}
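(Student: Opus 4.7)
The inclusion $\De_{\mfi,\mfm}\subseteq P_{\mfi,\mfm}$ is immediate from Definitions~\ref{def:polytope} and \ref{def: string polytope}, since the defining inequalities of $P_{\mfi,\mfm}$ form a proper subset of those defining $\De_{\mfi,\mfm}$. Thus what remains to show is the reverse inclusion under condition \textbf{(P)}. My plan is a volume-comparison argument: I will show that the Euclidean volumes of $P_{\mfi,\mfm}$ and $\De_{\mfi,\mfm}$ agree, and then use that equality of volumes of two closed convex bodies in $\R^n$, one contained in the other, forces the two bodies to coincide.

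The key input is the standard theorem in the theory of Newton-Okounkov bodies (Kaveh-Khovanskii \cite{Kaveh-Khovanskii}) asserting that for a projective variety $X$ of dimension $n$ together with a line bundle $\mcL$ whose associated semigroup is finitely generated, the Euclidean volume of the Newton-Okounkov body $\De(R,v)$ satisfies $n!\cdot \mr{vol}(\De(R,v)) = \deg_{\mcL}(X)$. In particular, this volume depends only on $(X,\mcL)$ and \emph{not} on the choice of valuation $v$. Applying this principle to $X=Z_{\mfi}$ equipped with the line bundle $\mcL=L_{\mfi,\mfm}$, and using Theorem~\ref{thm:HY} (which identifies $P_{\mfi,\mfm}^{op}$ as the Newton-Okounkov body with respect to $v_{\min^{op}}$ under condition \textbf{(P)}) together with Theorem~\ref{thm:Fujita} (which identifies $-\De_{\mfi,\mfm}$ as the Newton-Okounkov body with respect to $v_{\max}$), I conclude
\[
\mr{vol}(P_{\mfi,\mfm}^{op}) \;=\; \mr{vol}(-\De_{\mfi,\mfm}).
\]
The maps $(x_1,\dots,x_n)\mapsto (x_n,\dots,x_1)$ and $(x_1,\dots,x_n)\mapsto (-x_1,\dots,-x_n)$ are both volume-preserving affine involutions of $\R^n$, so
$\mr{vol}(P_{\mfi,\mfm})=\mr{vol}(P_{\mfi,\mfm}^{op})$ and $\mr{vol}(\De_{\mfi,\mfm})=\mr{vol}(-\De_{\mfi,\mfm})$, giving $\mr{vol}(\De_{\mfi,\mfm})=\mr{vol}(P_{\mfi,\mfm})$.

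To finish, I note that if $A\subseteq B$ are closed convex subsets of $\R^n$ with $\mr{vol}(A)=\mr{vol}(B)<\infty$, then $A=B$: indeed, $A$ must have nonempty interior (as its volume is positive, assuming $\mfm\ne 0$), and if some $b\in B\setminus A$ existed, the closedness of $A$ would produce an open ball in $B\setminus A$ of positive volume (using convexity of $B$ to push $b$ slightly into the interior of $B$ if necessary), contradicting the volume equality. Applying this with $A=\De_{\mfi,\mfm}$ and $B=P_{\mfi,\mfm}$ yields $\De_{\mfi,\mfm}=P_{\mfi,\mfm}$, as desired. The main technical nuisance I foresee is confirming that the Kaveh-Khovanskii volume identity applies cleanly to $L_{\mfi,\mfm}$ even when some $m_i=0$ (so that $L_{\mfi,\mfm}$ is only globally generated rather than very ample, by Proposition~\ref{prop:line bundle}); this is resolved by invoking the extension of the volume formula to the semiample/big setting, which is standard in the Newton-Okounkov literature.
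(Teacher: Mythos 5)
Your proposal is correct and follows essentially the same route as the paper: establish $\De_{\mfi,\mfm}\subseteq P_{\mfi,\mfm}$ from the definitions, use Theorems~\ref{thm:HY} and~\ref{thm:Fujita} together with the valuation-independence of Newton-Okounkov volumes to equate the volumes (coordinate reversal and negation being volume-preserving), and conclude equality of nested convex bodies of equal volume. You simply spell out a few steps the paper leaves implicit (the Kaveh--Khovanskii degree formula and the nested-convex-bodies lemma), with the shared caveat that the argument as stated tacitly assumes the polytopes are full-dimensional.
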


\begin{proof}
From Theorem \ref{thm:HY} and Theorem~\ref{thm:Fujita} we know that both 
 $P_{\mfi,\mfm}^{op}$ and $-\De_{\mfi, \mfm}$ are Newton-Okounkov bodies of $Z_{\mfi}$ with respect to the same line bundle $L_{\mfi,\mfm}$, but with respect to different valuations. The volume of a Newton-Okounkov body is independent of the choice of valuation, so these convex polytopes have the same volume. Since reversal of coordinates and taking negatives both preserve volume, we conclude $\De_{\mfi, \mfm}$ and $P_{\mfi, \mfm}$ have the same volume. On the other hand, we have already seen that $\De_{\mfi, \mfm} \subseteq P_{\mfi, \mfm}$. Hence they must be equal. 

\end{proof}

\begin{rem} 
The description of the polytope $P_{\mfi,\mfm}$ given in Definition~\ref{def:polytope} is quite explicit and the functions $A_j$ are relatively simple. Thus one can see, for example, that if $(\mfi,\mfm)$ satisfies the condition {\bf (P)}, then the polytope $P_{\mfi,\mfm}$ is a lattice polytope whose vertices are easily computed \cite{HY1}.
Thus, from Proposition~\ref{prop:equivalence} above, under the condition {\bf (P)}, we can deduce that 
$\De_{\mfi,\mfm}$  is also a lattice polytope. (Fujita proves that the $\De_{\mfi,\mfm}$ are rational polytopes in \cite{Fujita}.) 
Moreover, under the condition {\bf (P)}, Proposition~\ref{prop:equivalence} implies that the inequalities $\Psi^{(k)}(i)\ge 0$ appearing in the definition of the $\De_{\mfi,\mfm}$ are redundant. Since the functions $\Psi^{(k)}(i)$ are rather more complicated than the functions $A_j$, these facts seem not so straightforward to prove directly. 

\end{rem} 
 
\section{A functorial desingularization of singularities}\label{sec:main}

As mentioned in the introduction, toric degenerations of flag varieties are actively studied due to their connections to e.g. representation theory and Schubert calculus. For example, Kiritchenko, Smirnov, and Timorin give a description of the multiplicative structure of the cohomology ring of the type A flag variety in terms of certain subsets of faces of the Gel'fand-Zeitlin polytope in \cite{KST}.  A complication that arises in these considerations is that the Gel'fand-Zeitlin polytopes -- and more generally, string polytopes-- are singular (i.e. its associated toric variety is singular). Thus we were led to wonder whether one could associate to a flag variety $G/B$ and a dominant weight $\la \in \Lambda$ a choice of non-singular toric variety which fits naturally in a diagram relating it to both $G/B$ and the toric variety of its string polytope $\De_\la$. Our approach to this question is to employ a Bott-Samelson variety $Z_{\mfi}$ which is a birational model of $G/B$. Our main result, Theorem~\ref{thm:main}, is one potential answer to this question, and its main assertion is the existence of a commutative diagram involving two toric degenerations which relate all four varieties in question: the flag variety $G/B$, the toric variety of $\De_\la$, the Bott-Samelson variety $Z_\mfi$, and a non-singular toric variety corresponding to certain auxiliary data. In particular, our non-singular toric variety provides a resolution of singularities of the toric variety of $\De_\la$.

Before stating our main theorem, we recall some standard results which relate the geometry of Bott-Samelson varieties and flag varieties. Let $\lambda \in \Lambda$ be a dominant weight and let $\mathcal{L}_\lambda = (G \times_B \C_{\lambda} \to G/B)$ be the associated $G$-equivariant line bundle over $G/B$ as in~\eqref{eq: L lambda}. Using the map $\mu$ defined in~\eqref{eq: def mu} we can pull back $\mathcal{L}_{\lambda}$ to a line bundle on $Z_{\mfi}$. The following is well-known \cite[Chapters 13 and 14]{Jantzen}. 

\begin{lem}\label{lem:1} Let $\mfi=(i_1, \ldots,i_n)$ be a reduced word for $w=w_0$ the longest element in the Weyl group. Then $H^0(Z_{\mfi}, \mu^*\mathcal{L}_{\la}) \cong H^0(G/B, \mathcal{L}_{\la})$.  
\end{lem}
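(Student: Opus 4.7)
The plan is to prove the lemma by combining the projection formula with the fact that $\mu$ is a proper birational morphism. Since $\mfi$ is a reduced word for $w_0$, the dimension of $Z_{\mfi}$ equals $\ell(w_0) = \dim G/B$, and as noted in the paragraph following~\eqref{eq: def mu}, $\mu$ is birational onto its image, which is the Schubert variety associated to $w_0$, namely all of $G/B$. Also, $\mu$ is proper since $Z_{\mfi}$ is projective. The key geometric input is the equality
\[
\mu_* \mathcal{O}_{Z_{\mfi}} = \mathcal{O}_{G/B}.
\]
Granting this, the projection formula immediately yields $\mu_*(\mu^* \mathcal{L}_{\la}) \cong \mathcal{L}_{\la} \otimes \mu_* \mathcal{O}_{Z_{\mfi}} \cong \mathcal{L}_{\la}$, and then since $H^0(Z_{\mfi}, \mathcal{F}) = H^0(G/B, \mu_* \mathcal{F})$ for any coherent sheaf $\mathcal{F}$ on $Z_{\mfi}$ (by definition of the direct image functor), applying this with $\mathcal{F} = \mu^* \mathcal{L}_{\la}$ gives the desired isomorphism. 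The natural map realizing the isomorphism is the pullback of sections via $\mu$.

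The main obstacle is the equality $\mu_* \mathcal{O}_{Z_{\mfi}} = \mathcal{O}_{G/B}$. This should follow from general principles: $\mu$ is proper and birational, $G/B$ is smooth (hence normal), so Stein factorization $\mu: Z_{\mfi} \to Y \to G/B$ produces a finite morphism $Y \to G/B$ which must be an isomorphism since it is birational and $G/B$ is normal; consequently $\mu_* \mathcal{O}_{Z_{\mfi}} = \mathcal{O}_{G/B}$. Equivalently, this is a standard fact in the theory of Bott-Samelson resolutions, namely that $\mu$ is a rational resolution of the Schubert variety it maps onto (which in this case is all of $G/B$). The cited reference \cite[Chapters 13 and 14]{Jantzen} treats this material in detail.

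As an alternative (and arguably more elementary) route, one could use character-theoretic arguments: the pullback $H^0(G/B, \mathcal{L}_{\la}) \to H^0(Z_{\mfi}, \mu^* \mathcal{L}_{\la})$ is injective because $\mu$ is dominant, so restriction of sections to the open set where $\mu$ is an isomorphism is already injective. The character of $H^0(Z_{\mfi}, \mu^* \mathcal{L}_{\la})$ as a $B$-module can be computed by iterating Demazure operators $D_{i_1} \cdots D_{i_n}$ on $e^{\la}$, and since $\mfi$ is a reduced expression for $w_0$ this agrees with the Weyl character formula, matching the character of $H^0(G/B, \mathcal{L}_{\la}) \cong V_{\la}^*$ given by Borel-Weil. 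Since both sides have the same finite dimension, the injection is an isomorphism. I would present the projection formula argument as primary and defer to \cite{Jantzen} for the key ingredient $\mu_* \mathcal{O}_{Z_{\mfi}} = \mathcal{O}_{G/B}$.
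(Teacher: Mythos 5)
Your argument is correct, and it is essentially the argument the paper is implicitly invoking: the lemma is stated without proof as a well-known fact with a citation to Jantzen, where the standard proof is exactly the one you give, namely $\mu_*\mathcal{O}_{Z_{\mfi}} \cong \mathcal{O}_{G/B}$ (via Stein factorization, birationality of $\mu$ onto the Schubert variety of $w_0$, which is all of the normal variety $G/B$) combined with the projection formula. Your secondary character-theoretic route is also standard, though one should note that the Demazure character formula for $H^0(Z_{\mfi},\mu^*\mathcal{L}_{\la})$ is itself usually derived from the same pushforward and vanishing statements, so it is not really an independent alternative.
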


It is also known that there exists a special choice $\mfm(\lambda) \in \Z^n_{\geq 0}$ of multiplicity list such that the pullback bundle $\mu^*(\mathcal{L}_{\la})$ is isomorphic to $L_{\mfi, \mfm(\lambda)}$.  
Specifically, for a dominant weight $\la \in \Lambda$ we define $\mfm(\la)$ as follows. 
Write $\la=\ell_1\varpi_1+\cdots+\ell_r\varpi_r$ with respect to the fundamental weights $\varpi_i (i=1,\dots, r)$.  Suppose the rightmost occurrence of $i=1$ in the word $\mfi$ is at position $k$: that is, $i_k=1, i_j\ne 1$ for $ j>k$. If this is the case, we define $\mfm(\la)_k=\ell_1$. (If $i=1$ does not occur in $\mfi$, then proceed to the next step.) Next, suppose $k'$ be the rightmost occurrence of $i=2$ in $\mfi$. Then we define $\mfm(\la)_{k'}=\ell_2$. Proceed in this way for each $i=1,\dots, r$. Finally, we define $\mfm(\la)_j=0$ if it has not already been defined in the previous steps.
We have the following \cite{Jantzen}. 

\begin{lem}\label{lem:2}
Let $\mfi=(i_1,\ldots,i_n)$ be a reduced word and $\la \in \Lambda$ be a dominant weight. Then the pull-back bundle $\mu^*(\mathcal{L}_{\la})$ over $Z_{\mfi}$ is isomorphic to $L_{\mfm(\la)}$, where $\mfm(\la)$ is defined as above. 
\end{lem}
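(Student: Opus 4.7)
The plan is to reduce to the case of a single fundamental weight $\la=\varpi_i$ and then exploit the factorization of $\mu$ through the natural projection $G/B\to G/\hat{P}_i$. Writing $\la=\sum_i \ell_i\varpi_i$, we have $\mcL_\la\cong \bigotimes_i \mcL_{\varpi_i}^{\otimes \ell_i}$ on $G/B$, so since pullback commutes with tensor products and $L_{\mfi,\mfm}$ is additive in $\mfm$ (Lemma~\ref{lem: tau is additive}), it suffices to show that $\mu^*\mcL_{\varpi_i}\cong L_{\mfi,e_{k_i}}$ for each $i$ appearing in $\mfi$, where $k_i$ denotes the rightmost position in $\mfi$ with $\be_{k_i}=\alpha_i$ and $e_{k_i}$ is the standard basis vector with a $1$ in position $k_i$, while $\mu^*\mcL_{\varpi_i}$ is trivial if no such $k_i$ exists. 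Tensoring and summing over $i$ then reproduces exactly the multiplicity list $\mfm(\la)$ constructed immediately before the lemma.

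For the fundamental-weight case, I would first note that $\mcL_{\varpi_i}$ on $G/B$ is itself the pullback of $\mcL_{\varpi_i}$ on the partial flag variety $G/\hat{P}_i$ under the canonical projection (since $\varpi_i$ is trivial on the Levi of $\hat{P}_i$), so $\mu^*\mcL_{\varpi_i}$ coincides with the pullback under the composition $Z_\mfi\xrightarrow{\mu}G/B\to G/\hat{P}_i$, which sends $[p_1,\ldots,p_n]\mapsto p_1\cdots p_n\hat{P}_i$. The key geometric observation is that $P_{\be_j}\subseteq\hat{P}_i$ whenever $\be_j\neq\alpha_i$; hence if $k=k_i$ is the rightmost position with $\be_k=\alpha_i$, then $p_{k+1}\cdots p_n\in\hat{P}_i$ for all $(p_j)$, giving $p_1\cdots p_n\hat{P}_i=p_1\cdots p_k\hat{P}_i$. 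This identifies the composition above with $\pi_k\circ\phi$ (after identifying $\hat{P}_i=\hat{P}_{\be_k}$), and then the description $L_{\mfi,\mfm}\cong \phi^*(\bigotimes_{j=1}^n\pi_j^*\mcL_{\varpi_{\be_j}}^{\otimes m_j})$ recorded earlier in Section~3 immediately yields $\mu^*\mcL_{\varpi_i}\cong L_{\mfi,e_k}$. If $\alpha_i$ does not occur in $\mfi$ at all, the same reasoning shows the entire product $p_1\cdots p_n$ lies in $\hat{P}_i$, so the map to $G/\hat{P}_i$ is constant and $\mu^*\mcL_{\varpi_i}$ is trivial, matching the recipe.

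The only anticipated obstacle is the bookkeeping of character and sign conventions between the $B^n$-character $(b_1,\ldots,b_n)\mapsto e^{-\la_1}(b_1)\cdots e^{-\la_n}(b_n)$ that defines $L_{\mfi}(\la_1,\ldots,\la_n)$ and the convention chosen for $\mcL_\la$ on $G/B$. A convention-robust alternative that I would fall back on if needed is to pull back the relevant data to $P_{\be_1}\times\cdots\times P_{\be_n}$ and verify directly that the $B^n$-cocycle of $\mu^*\mcL_\la$ is $(b_1,\ldots,b_n)\mapsto e^{-\la}(b_n)$, since right-multiplication by $(b_1,\ldots,b_n)$ in the Bott-Samelson presentation transforms the product $p_1\cdots p_n$ into $p_1\cdots p_n\cdot b_n$; one then rewrites this character as $\prod_j e^{-m_j(\la)\varpi_{\be_j}}(b_j)$ modulo $B^n$-coboundaries, using the reducedness of $\mfi$ so that the resulting coboundaries are globally defined on $P_{\be_1}\times\cdots\times P_{\be_n}$.
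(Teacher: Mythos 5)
Your argument is correct. Note, however, that the paper offers no proof of this lemma at all --- it is stated as a known fact with a citation to Jantzen --- so there is no in-paper argument to compare against; what you have written is a self-contained derivation of the cited result. Your main line of reasoning is sound and is the standard one: decompose $\la=\sum_i\ell_i\varpi_i$, observe that $\mcL_{\varpi_i}$ on $G/B$ descends from (i.e.\ is pulled back from) $G/\hat{P}_i$, and use that $P_{\be_j}\subseteq\hat{P}_i$ for $\be_j\neq\al_i$ to see that the composite $Z_{\mfi}\to G/B\to G/\hat{P}_i$ equals $\pi_{k_i}\circ\phi$ with $k_i$ the rightmost occurrence of $\al_i$ in $\mfi$; combined with the identification $L_{\mfi,\mfm}\cong\phi^*(\otimes_j\pi_j^*\mcL_{\varpi_{\be_j}}^{\otimes m_j})$ already recorded in Section~3, this gives exactly $\mfm(\la)$, and this is precisely why the rightmost occurrences enter the definition of $\mfm(\la)$. (Incidentally, reducedness of $\mfi$ plays no role in this argument, which is consistent with the lemma holding for arbitrary words.) The only soft spot is the fallback paragraph: the pulled-back cocycle $e^{-\la}(b_n)$ and the product $\prod_j e^{-\mfm(\la)_j\varpi_{\be_j}}(b_j)$ differ by a coboundary only after one exhibits explicit nowhere-vanishing equivariant functions, and the relevant input there is extendability of the characters $\varpi_i$ to the parabolics $\hat{P}_i$ rather than reducedness of $\mfi$; but since your primary argument does not rely on this fallback, the proof stands.
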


Finally, note that the results of Anderson as recounted in Section~\ref{sec:background} imply that there exists a flat family 
$\mk{X}'$ such that the generic fiber of $\mk{X}'$ is isomorphic to $G/B$ and the special fiber of $\mk{X}'$ is $\Proj \C[\Gamma']$ where $\Gamma' = \Gamma(R', \nu_{max})$. Since $\Gamma'$ is the set of lattice points contained in a strongly convex rational polyhedral cone \cite[Section 3.2 and Theorem 3.10]{BZ} it can be seen from standard results on toric varieties \cite[Section 1.2 and Section 2.1]{CLS} that $\Proj \C[\Gamma']$ is normal. Thus the singular fiber is isomorphic to the toric variety $\mr{Tor}(\De_{\mfi,\mfm(\la)})$ corresponding to the string polytope $\De_{\mfi, \mfm(\la)}$ (cf. Remark~\ref{rem: string}). As noted already, $\mr{Tor}(\De_{\mfi,\mfm(\la)})$ may in general be singular.  The point of our main result, which we can now state, is to provide a functorial desingularization of the toric variety $\mr{Tor}(\De_{\mfi,\mfm(\la)})$ which fits in a larger commutative diagram involving a (smooth) Bott tower and the family $\mk{X}'$ mentioned above. 

\begin{thm}\label{thm:main}
Let $\mfi=(i_1,\ldots,i_n)$ be a reduced word for $w_0$ the longest element of $W$. Let $\lambda \in \Lambda$ a dominant weight and $\mfm(\lambda) \in \Z^n_{\geq 0}$ the corresponding multiplicity list, as defined above. Let $\mk{X}'$ be the family described above. 
Then there exists a multiplicity list $\mfm=(m_1,\ldots,m_n) \in \Z^n_{\geq 0}$ and 
a one-parameter flat family $\mk{X}$ with the following properties: 

\begin{enumerate}
\item The polytope $\De_{\mfi,\mfm}$ is a smooth lattice polytope and thus $\mr{Tor}(\De_{\mfi,\mfm})$ is smooth. 
\item The generic fiber of $\mk{X}$ is isomorphic to $Z_{\mfi}$.
\item The special fiber of $\mk{X}$ is isomorphic to the smooth toric variety $\mr{Tor}(\De_{\mfi,\mfm})$ of the (generalized) string polytope $\De_{\mfi, \mfm}$.
\item We have $\Delta_{\mfi, \mfm(\la)} \subseteq \Delta_{\mfi, \mfm}$ and the birational map $\mu: Z_{\mfi} \to G/B$ of~\eqref{eq: def mu} degenerates to a birational morphism of toric varieties $\psi_0: \mr{Tor}(\De_{\mfi, \mfm}) \to \mr{Tor}(\De_{\mfi, \mfm(\la)})$, that is, there is a commutative diagram of families
 \[\xymatrix{\mk{X}\ar[rr]^{\Psi}
\ar[dr] && \mk{X}'\ar[dl]\\
&\C&
}\] such that $\Psi_t \cong \mu$ for $t \neq 0$ and $\Psi_0 = \psi_0$.

\end{enumerate}

\end{thm}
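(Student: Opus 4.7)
The strategy is to apply Anderson's functoriality theorem (Theorem~\ref{thm:functoriality}(2)) to the birational morphism $\mu: Z_{\mfi} \to G/B$ of~\eqref{eq: def mu}, using on $Z_{\mfi}$ the line bundle $L_{\mfi,\mfm}$ for a carefully chosen multiplicity list $\mfm$ and the valuation $v_{\max}$. By Theorem~\ref{thm:Fujita} this will produce a flat degeneration of $Z_{\mfi}$ to $\mr{Tor}(\De_{\mfi,\mfm})$ which fits in a commutative diagram with the given degeneration $\mk{X}'$ of $G/B$ to $\mr{Tor}(\De_{\mfi,\mfm(\la)})$. I first invoke Proposition~\ref{prop: nice m} to produce $\mfm \in \Z^n_{\geq 0}$ satisfying (a) $\mfm \geq \mfm(\la)$ componentwise, (b) $m_i > 0$ for every $i$ (so that $L_{\mfi,\mfm}$ is very ample by Proposition~\ref{prop:line bundle}), and (c) $(\mfi,\mfm)$ satisfies condition~\textbf{(P)} and $P_{\mfi,\mfm}$ is a smooth lattice polytope. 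By Proposition~\ref{prop:equivalence}, condition~\textbf{(P)} gives $\De_{\mfi,\mfm} = P_{\mfi,\mfm}$, so by Remark~\ref{remark:toric} and~\cite{HY1} this polytope is a Grossberg-Karshon twisted cube whose associated toric variety is a Bott tower; smoothness of $P_{\mfi,\mfm}$ then yields conclusion~(1).

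To set up the hypotheses of Theorem~\ref{thm:functoriality}(2), let $V = H^0(Z_{\mfi}, L_{\mfi,\mfm})$ and $R = R_{\mfi,\mfm}$, so $\Proj R \cong Z_{\mfi}$. By Lemmas~\ref{lem:1} and~\ref{lem:2}, pullback by $\mu$ identifies $H^0(G/B, \mathcal{L}_\la) \cong H^0(Z_{\mfi}, L_{\mfi,\mfm(\la)})$. Using property~(a) together with Lemma~\ref{lem: tau is additive}, define $V' \subset V$ as the image of the map $\sigma \mapsto \sigma \otimes \tau_{\mfi, \mfm - \mfm(\la)}$, which is injective since $\tau_{\mfi, \mfm - \mfm(\la)}$ is nowhere vanishing on the dense open $\mc{U}$ (Lemma~\ref{lem: tau nonvanishing}). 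Standard surjectivity of multiplication maps on the flag variety then identifies $R' = \oplus_d (V')^d$ with the homogeneous coordinate ring of $G/B$ under $\mathcal{L}_\la$, so $\Proj R' \cong G/B$; and the rational map $\psi: \Proj R \dashrightarrow \Proj R'$ induced by $V'$ agrees with $\mu$ on $\mc{U}$ and is therefore birational. Equipping $R$ with $v_{\max}$ and $R'$ with its restriction (so $h = \mr{id}_{\Z^n}$), compatibility is immediate from Lemma~\ref{lem: tau is additive}: for $\sigma \in H^0(Z_{\mfi}, L_{\mfi, \mfm(\la)})$,
\[
v_{\max}(\sigma \otimes \tau_{\mfi, \mfm - \mfm(\la)}) = \nu_{\max}\left(\frac{\sigma \otimes \tau_{\mfi, \mfm - \mfm(\la)}}{\tau_{\mfi, \mfm(\la)} \otimes \tau_{\mfi, \mfm - \mfm(\la)}}\right) = \nu_{\max}\left(\frac{\sigma}{\tau_{\mfi, \mfm(\la)}}\right).
\]
Finite generation of the semigroups $\Ga(R, v_{\max})$ and $\Ga(R', v_{\max})$ follows from Theorem~\ref{thm:Fujita}, so Theorem~\ref{thm:functoriality}(2) produces the commutative diagram with $\Psi_t \cong \mu$ for $t \neq 0$ and $\Psi_0 = \psi_0$ a birational morphism of toric varieties. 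Finally, since both $\De_{\mfi,\mfm}$ and $\De_{\mfi,\mfm(\la)}$ are lattice polytopes and $v_{\max}$ has one-dimensional leaves, the associated semigroups are saturated, so the two special fibers are the normal toric varieties $\mr{Tor}(\De_{\mfi,\mfm})$ and $\mr{Tor}(\De_{\mfi,\mfm(\la)})$ respectively, completing conclusions~(2)--(4).

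The principal obstacle is Proposition~\ref{prop: nice m}: producing a single $\mfm$ which simultaneously dominates $\mfm(\la)$, satisfies condition~\textbf{(P)}, and yields a smooth polytope $P_{\mfi,\mfm}$. I expect the construction to proceed recursively from $j = n$ down to $j = 1$, at each stage choosing $m_j$ large enough that the inequality $A_j(x_{j+1},\ldots,x_n) \geq 0$ of Definition~\ref{definition:conditionP} holds throughout the region cut out by the previously imposed inequalities, and arranged so that the resulting iterated $A_j$-fibration forms an honest Bott tower. The combinatorial analysis of Grossberg-Karshon twisted cubes from~\cite{HY1} should be the main tool; the delicate point is that dominating the (possibly degenerate) list $\mfm(\la)$ constrains the choice of $m_j$'s, and one must verify that simultaneous largeness and integrality can be arranged compatibly with the Bott-tower recursion.
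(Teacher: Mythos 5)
Your proposal follows essentially the same route as the paper: choose $\mfm$ via Proposition~\ref{prop: nice m}, use Proposition~\ref{prop:equivalence} to identify $\De_{\mfi,\mfm}$ with the smooth lattice polytope $P_{\mfi,\mfm}$, and verify the compatibility hypotheses of Theorem~\ref{thm:functoriality}(2) by tensoring with $\tau_{\mfi,\mfm-\mfm(\la)}$ exactly as the paper does (including the same valuation computation via Lemma~\ref{lem: tau is additive}).

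The one soft spot is your justification that the special fiber is the \emph{normal} toric variety $\mr{Tor}(\De_{\mfi,\mfm})$: you assert that the semigroup is saturated ``since $\De_{\mfi,\mfm}$ is a lattice polytope and $v_{\max}$ has one-dimensional leaves,'' but a Newton--Okounkov body being a lattice polytope does not by itself imply the value semigroup is saturated or generated in level~$1$. The paper closes this by first observing that condition \textbf{(P)} for $(\mfi,\mfm)$ implies condition \textbf{(P)} for $(\mfi,r\mfm)$ for every $r\ge 1$, and then combining Proposition~\ref{prop:equivalence} with \cite[Proposition 2.5]{HY2} and the argument of \cite[Proof of Theorem 3.4]{HY2} (which uses that $P_{\mfi,\mfm}$ is a lattice polytope) to conclude that $\Ga$ is generated in level~$1$, whence $\Proj\C[\Ga]\cong\mr{Tor}(\De_{\mfi,\mfm})$; for the $G/B$ side the normality of $\Proj\C[\Ga']$ is already built into the definition of $\mk{X}'$ via the Berenstein--Zelevinsky cone description, so no lattice-polytope claim about $\De_{\mfi,\mfm(\la)}$ is needed or used. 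Aside from supplying that argument, your proof is the paper's proof.
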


\begin{rem}
The smooth toric variety $\mr{Tor}(\De_{\mfi,\mfm})$ which appearing above is an instance of a \textbf{Bott tower} (see e.g. \cite{Grossberg-Karshon1994}). 
\end{rem}

To prove the above theorem, it suffices to check that we can find a multiplicity list $\mfm=(m_1,\ldots,m_n)$ in such a way that the corresponding data satisfy the hypotheses of Anderson's Theorem~\ref{thm:functoriality}.  The following proposition is the main technical result of this section, from which Theorem~\ref{thm:main} easily follows. 
Moreover, our proof of Proposition~\ref{prop: nice m} shows that there are many choices of multiplicity lists $\mfm$ for which Theorem~\ref{thm:main} holds.

\begin{prop}\label{prop: nice m} 
Let $\mfi=(i_1,\ldots,i_n)$ be a reduced word. Let $\lambda \in \Lambda$ be a dominant weight and let $\mfm(\la) \in \Z^n_{\geq 0}$ be the multiplicity list associated to $\la$ as defined above. 
Then there exists a multiplicity list $\mfm=(m_1,\dots,m_n) \in \Z^n_{\geq 0}$ such that
\begin{enumerate}
\item $\mfm(\la)_i \le m_i  \textup{ for all } i=1,\dots, n,$
\item$ m_i>0 \textup{ for all }  i=1,\dots, n $, or equivalently, $L_{\mfi,\mfm}$ is very ample, 
\item the pair $(\mfi, \mfm)$ satisfies the condition {\bf (P)}, and 
\item the polytope $P_{\mfi,\mfm}$ is a smooth lattice polytope.
\end{enumerate}

\end{prop}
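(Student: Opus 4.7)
\textbf{Proof plan for Proposition \ref{prop: nice m}.}

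The plan is to construct $\mfm$ inductively from the right, choosing $m_n, m_{n-1}, \ldots, m_1$ in order, at each stage making the newly-chosen $m_k$ large enough to force $A_k \geq 0$ throughout the previously-constructed polytope. Unpacking the pairing $\langle \varpi_{\be_j}, \be_k^\vee \rangle = \delta_{i_j, i_k}$, the function $A_k$ takes the shape
\[
A_k(x_{k+1},\dots,x_n) \;=\; m_k \;+\; \sum_{\substack{j > k \\ i_j = i_k}} m_j \;-\; \sum_{j > k} x_j \langle \be_j, \be_k^\vee \rangle,
\]
so condition $(P\text{-}k)$ reduces to a lower bound of the form $m_k \geq C_k$, where
\[
C_k \;:=\; \max_{(x_{k+1},\dots,x_n) \in R_k}\!\left( \sum_{j > k} x_j \langle \be_j, \be_k^\vee \rangle \;-\; \sum_{\substack{j>k\\ i_j = i_k}} m_j \right)
\]
and $R_k \subset \R^{n-k}$ is the compact polytope cut out by $0 \leq x_j \leq A_j(x_{j+1},\dots,x_n)$ for $j > k$. (The region $R_k$ makes sense by induction, since $m_{k+1},\ldots,m_n$ are already fixed and conditions $(P\text{-}(k{+}1)),\ldots,(P\text{-}n)$ already hold.) The maximum $C_k$ is finite because $R_k$ is compact, so we may simply set
\[
m_k \;:=\; \max\!\bigl(\mfm(\la)_k,\; 1,\; \lceil C_k \rceil\bigr).
\]
This choice manifestly enforces (1), (2), and condition $(P\text{-}k)$; iterating from $k=n$ down to $k=1$ then gives (3). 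The base case $k=n$ is trivial since $A_n = m_n$, and we take $m_n := \max(\mfm(\la)_n, 1)$.

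For (4) the argument is structural. Once $(\mfi,\mfm)$ satisfies condition \textbf{(P)}, the results of \cite{HY1} (see Remark \ref{remark:toric}) identify $P_{\mfi,\mfm}$ with a Grossberg--Karshon twisted cube. Because each $A_k$ is an integer-coefficient affine function of $x_{k+1},\ldots,x_n$ (the $m_j$ are integers and so are the Cartan pairings $\langle\be_j,\be_k^\vee\rangle$), the polytope $P_{\mfi, \mfm}$ is a lattice polytope whose defining inequalities are obtained by iteratively stacking intervals of integer length over a base polytope. This is precisely the moment polytope of an iterated $\Pro^1$-bundle, that is, a Bott tower. Bott towers are smooth by construction, so $\mathrm{Tor}(P_{\mfi,\mfm})$ is smooth and (4) holds. (Equivalently, one can check directly that at every vertex of $P_{\mfi,\mfm}$, obtained by fixing each $x_j$ at $0$ or at $A_j(x_{j+1},\ldots,x_n)$, the primitive edge vectors form a $\Z$-basis of $\Z^n$, because unsetting the constraint in slot $j$ moves $x_j$ along $\pm e_j$ modulo the lower-triangular dependence on $x_{j+1},\ldots,x_n$.)

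The step I expect to be the main obstacle is the verification that my inductive bound $C_k$ really does close up into a global statement of condition \textbf{(P)} as defined in Definition \ref{definition:conditionP}. Condition $(P\text{-}k)$ is phrased in terms of the region $R_k$ cut out by the already-established inequalities $0 \leq x_j \leq A_j(x_{j+1},\ldots,x_n)$ for $j > k$, but these inequalities themselves only make sense inside $R_k$ once one knows $A_j \geq 0$ on the relevant subregion. The induction hypothesis handles this: having chosen $m_{k+1},\ldots,m_n$ so that $(P\text{-}(k{+}1)),\ldots,(P\text{-}n)$ hold, the region $R_k$ is a genuine non-empty compact polytope, and the maximum $C_k$ is a well-defined real number. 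The only other point requiring care is that $\lceil C_k\rceil$ is indeed a legitimate choice (i.e. there is no circularity: $C_k$ depends only on previously-chosen data), which is immediate from the formula above. Once the inductive bookkeeping is set up properly, the remaining estimates are routine.
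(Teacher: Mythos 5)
Your inductive construction of $\mfm$ from the right is essentially the paper's strategy, and it does secure parts (1), (2), (3): choosing $m_k$ at least as large as the maximum of $\sum_{j>k} x_j\langle\be_j,\be_k^\vee\rangle - \sum_{j>k, i_j=i_k} m_j$ over the region already cut out forces $A_k \geq 0$ there, which is exactly condition (P-$k$). (The paper takes the maximum over the finite set of Cartier data vectors $\mathfrak{r}_\sigma$ of the associated divisor on Pasquier's Bott tower rather than over the continuous region, but that is a cosmetic difference.)

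The gap is in part (4). Condition \textbf{(P)}, i.e.\ the non-strict inequalities $A_k \geq 0$, gives you a lattice polytope (basepoint-freeness of $D_{\mfi,\mfm}$), but it does \emph{not} give smoothness, and your argument that ``iteratively stacking intervals of integer length'' automatically yields the moment polytope of a Bott tower is false precisely when some interval collapses to a point. When $A_k$ attains the value $0$ somewhere on the region, the $2^n$ candidate vertices $\mathfrak{r}_\sigma$ (indexed by $\sigma\in\{+,-\}^n$) need not be pairwise distinct, the polytope need not be combinatorially a cube, and it can fail to be simple — so its normal fan is \emph{not} the smooth fan $\Sigma_\mfi$. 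Your parenthetical vertex check presupposes that every vertex arises by independently setting each $x_j$ to $0$ or to $A_j$, which is exactly what fails. Worse, your recipe $m_k := \max(\mfm(\la)_k, 1, \lceil C_k\rceil)$ deliberately makes $\min A_k = 0$ on the region whenever $\lceil C_k\rceil$ is the binding term, which is the degenerate situation. Concretely, in the paper's Example with $G=\SL_3$, $\mfi=(1,2,1)$, $\la=\al_1+\al_2$, your recipe outputs $\mfm=(1,1,1)$, and the paper explicitly notes that $P_{(1,2,1),(1,1,1)}$ satisfies \textbf{(P)} but is \emph{not} a simple polytope — so your construction fails part (4) on this example. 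The missing idea is to demand the \emph{strict} inequality $\mathfrak{r}_{\sigma,i}=A_i(\mathfrak{r}_{\sigma,i+1},\dots,\mathfrak{r}_{\sigma,n})>0$ whenever $\sigma_i=-$; this makes the $\mathfrak{r}_\sigma$ pairwise distinct, hence $D_{\mfi,\mfm}$ very ample with normal fan of $P_{\mfi,\mfm}$ equal to the smooth fan $\Sigma_\mfi$. The paper achieves this by taking $m_k$ at least $1-\sum_{\ell>k,\,\be_\ell=\be_k}(m_\ell-2M_\ell)$ where $M_\ell=\max_\sigma \mathfrak{r}_{\sigma,\ell}$; replacing your $\lceil C_k\rceil$ by $\lceil C_k\rceil+1$ (or any strict margin) would repair your version.
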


Before embarking on the proof of Proposition~\ref{prop: nice m} we briefly review some 
results related to the polytope $P_{\mfi,\mfm}$. In \cite{Pasquier}, Pasquier describes a degeneration of $Z=Z_{\mfi}$ to a smooth toric variety $X(\Si_{\mfi})$; the fan $\Si_{\mfi}$ in $\R^n$ is complete and smooth, and $X(\Si_{\mfi})$ is commonly called a Bott tower. The polytope $P_{\mfi,\mfm}$ is in fact the polytope of a torus-invariant divisor $D_{\mfi,\mfm}$ on $X(\Si_{\mfi})$ \cite{HY1}.  The Cartier data of the divisor $D_{\mfi, \mfm}$ can be described explicitly, as follows \cite[Lemma 1.9]{HY1}. Firstly, the  maximal cones of $\Si_{\mfi}$  are in 1-1 correspondence with the set $\{+,-\}^n$. Secondly, for each maximal cone $\si=(\si_1,\dots,\si_n)\in\{+,-\}^n$, the associated Cartier data $\mk{r}_{\si}=(\mk{r}_{\si,1},\dots,\mk{r}_{\si,n})\in\Z^n$ is given by the formula
\begin{equation}\label{eq: def Cartier}
\mk{r}_{\si,i}=\left\{\begin{array}{lc} 0 & \text{ if }\si_i=+\\ A_i(\mk{r}_{\si,i+1},\dots, \mk{r}_{\si,n})& \text{ if }\si_i=-
\end{array}\right. 
\end{equation}
for all $i=1,\dots, n$, where the $A_i$ are the functions defined in~\eqref{def: As}.

\begin{proof}[Proof of Proposition \ref{prop: nice m} ] 
From \cite[Proposition 2.1, Theorem 2.4]{HY1} we know that the following $3$ statements are equivalent: 
(i) the pair $(\mfi, \mfm)$ satisfies the condition {\bf (P)}, and (ii) 
the divisor $D_{\mfi,\mfm}$ described above is basepoint-free, and (iii) 
$ \mk{r}_{\si,i}\ge 0$ for all $\si\in\{+,-\}^n $and all $i=1,\dots, n$. 
Also, it is a basic fact from toric geometry  \cite[\S 6.1]{CLS} that $D_{\mfi,\mfm}$ is basepoint-free if and only if $\{\mk{r}_{\si}|\si\in\{+,-\}^n\}$ is the set of vertices of $P_{\mfi,\mfm}$. Further, the divisor $D_{\mfi,\mfm}$ is very ample if, in addition, the elements $\mk{r}_{\si} (\si\in\{+,-\}^n)$ are pairwise distinct.
Finally, if $\{ \mk{r}_{\si} \hspace{2mm} \vert \hspace{2mm} \si\in\{+,-\}^n \}$ is equal to the set of vertices of $P_{\mfi, \mfm}$ and its elements are all distinct, then the normal fan of $P_{\mfi,\mfm}$ is equal to $\Si_{\mfi}$ \cite[Theorem 6.2.1]{CLS}. By the results of \cite{Pasquier} as above, we know $\Si_{\mfi}$ is smooth, so we may conclude $P_{\mfi,\mfm}$ is a smooth polytope. Moreover, if $(\mfi, \mfm)$ satisfies the condition {\bf (P)}, then from \cite{HY1} we know $P_{\mfi, \mfm}$ is a lattice polytope. 
It follows that, in order to prove the proposition, it suffices to find  
$\mfm=(m_1,\dots,m_n) \in \Z^n$ such that 
\begin{enumerate} 
\item[(M1)] $\mfm(\la)_i \le m_i $, for all $i=1, \ldots,n$, 
\item[(M2)] $m_i > 0$, for all $i=1, \ldots, n$, 
\item[(M3)] $\mk{r}_{\si,i}\ge  0$ for all $i=1,\dots,n$ and for all $\si \in \{+,-\}^n$, and 
\item[(M4)] the $\mk{r}_{\si}$ are distinct for all $\si \in \{+,-\}^n$. 
\end{enumerate} 
From the description of the vectors $\mk{r}_{\si}$ given in~\eqref{eq: def Cartier} 
it also follows that the conditions (M3) and (M4) above would be satisfied if $\mfm=(m_1,\ldots,m_n)$ has the property that 
\begin{equation}\label{eq: third condition}
\mk{r}_{\si, i} = A_i(\mk{r}_{\si, i+1},\dots, \mk{r}_{\si, n})> 0 \textup{  if $\si_i = -$, for any $\si \in \{+,-\}^n$ and for all $i, 1 \leq i \leq n$.}
\end{equation}
(Recall the functions $A_i$ depend on $\mfi$ and $\mfm$ although this is suppressed from the notation.) 

It now suffices to find an $\mfm=(m_1,\ldots,m_n)$ satisfying conditions (M1), (M2), and~\eqref{eq: third condition}. We will construct each $m_i$ inductively, starting with $i=n$.
Let $m_n$ be any integer greater than or equal to $\max\{\mfm(\la)_n,1\}$. Then, by construction, conditions (M1) and (M2) are satisfied for $i=n$; moreover, $\mk{r}_{\si,n}=A_n :=m_n>0$ for any $\si\in\{+,-\}^n$ with $\si_n=-$, so~\eqref{eq: third condition} is also satisfied for $i=n$. 
Now suppose that $k<n$ and also suppose by induction that the integers 
  $m_{k+1},\dots, m_n$ have been chosen to satisfy the conditions (M1), (M2), and~\eqref{eq: third condition} for all $i=\ell$ where $k+1 \leq \ell \leq n$. 
It is straightforward from the definition of the functions $A_i$ and the formula~\eqref{eq: def Cartier} 
that the value of $\mk{r}_{\sigma, \ell}$ for any $\sigma \in \{+,-\}^n$ is uniquely determined by the constants $m_{\ell}, m_{\ell+1}, \ldots, m_n$. Therefore, under the inductive hypothesis that the constants $m_{k+1}, \ldots, m_n$ have already been defined, we can set 
\[
M_{\ell}:=\max\{\mk{r}_{\si,\ell}|\si\in\{+,-\}^n\} 
\]
for all $\ell$ satisfying $k+1 \leq \ell \leq n$. Now we may choose 
$m_k$ to be any integer greater than or equal to 
\[
\max \left\{ \mfm(\la)_{k} \hspace{1mm},  \quad 1 \hspace{1mm}, \quad 1- \left(\sum_{\substack{k+1 \leq \ell \leq n \\ \be_\ell=\be_k}}(m_{\ell}-2M_{\ell}) \right) \right\}.
\] 
From the choice of $m_k$ it immediately follows that conditions (M1) and (M2) are satisfied for $i=k$. We now wish to show that~\eqref{eq: third condition} is also satisfied for $i=k$. 
Suppose $\si\in\{+,-\}^n$ with $\si_k=-$. We have 
\[\begin{array}{ccccl} \mk{r}_{\si,k}&=&A_k(\mk{r}_{\si,k+1},\dots, \mk{r}_{\si,n})&=&m_k+\sum\limits_{\ell>k,\be_k=\be_{\ell}}m_{\ell}-\sum\limits_{\ell>k} \lee \be_{\ell}, \be_k^{\vee}\ree \mk{r}_{\si,\ell}\\ &&&\ge&

m_k+\sum\limits_{\ell>k,\be_k=\be_{\ell}}(m_{\ell}- 2 \mk{r}_{\si,\ell})\\
 &&&\ge&

m_k+\sum\limits_{\ell>k,\be_k=\be_{\ell}}(m_{\ell}- 2 M_{\ell})\\
 &&&>&

0

\end{array}\]
where the first equality is by~\eqref{eq: def Cartier}, the second equality is the definition of $A_k$, 
the first inequality is because $\mk{r}_{\al,\ell}\geq 0$ for $\ell > k$ by the inductive hypothesis and $\lee \be_{\ell},\be_k^{\vee}\ree\le 0$ if $\be_{\ell}\ne\be_k$, the second inequality is by the definition of $M_{\ell}$, and the last inequality follows from the choice of $m_k$. Thus, we conclude that $\mk{r}_{\sigma, k} > 0$ for any $\sigma$ with $\sigma_k = -$, as was to be shown. Continuing inductively, it follows that we may choose constants $m_1, m_2, \ldots, m_n$ such that conditions (M1), (M2), and~\eqref{eq: third condition} are satisfied for all $i=1, \ldots, n$. This completes the proof.

\end{proof}

As advertised, the proof of Theorem~\ref{thm:main} follows straightforwardly from the above proposition.

\begin{proof}[Proof of Theorem \ref{thm:main}.]

Choose $\mfm=(m_1,\ldots,m_n)$ satisfying the conditions given in Proposition~\ref{prop: nice m}, whose existence is guaranteed by that proposition. Since the pair $(\mfi, \mfm)$ satisfies the condition {\bf (P)} by construction, Proposition~\ref{prop:equivalence} implies that $\Delta_{\mfi, \mfm} = P_{\mfi, \mfm}$. From Condition (4) of Proposition~\ref{prop: nice m} we may then conclude that $\Delta_{\mfi, \mfm}$ is a smooth lattice polytope, and thus that $\mr{Tor}(\Delta_{\mfi,\mfm})$ is a smooth toric variety, as desired. This proves the condition (1) in the statement of the theorem.

Consider the line bundle $L_{\mfi, \mfm}$ over $Z_{\mfi}$ associated to the choice of multiplicity list $\mfm$ in the previous paragraph. Note that $L_{\mfi, \mfm}$ is very ample by Proposition~\ref{prop:line bundle}, since by construction we have $m_i > 0$ for all $i$. 
Let $V = H^0(Z_{\mfi}, L_{\mfi, \mfm})$ and, following the notation of Section~\ref{sec:background}, 
let $R=R(V)$ denote the homogeneous coordinate ring associated to the Kodaira embedding of $Z_{\mfi}$ with respect to $L_{\mfi, \mfm}$. Let $v_{max}$ denote the valuation on $R$ used in Theorem~\ref{thm:Fujita}, which has one-dimensional leaves. By Theorem~\ref{thm:Fujita} we know that the corresponding semigroup $\Gamma=S(Z_{\mfi}, L_{\mfi, \mfm}, \nu_{max})$ is finitely generated, so we can apply Theorem~\ref{thm:NOBY degeneration} to obtain a flat family $\mk{X} \to \mathbb{A}^1$. Moreover, by Theorem~\ref{thm:NOBY degeneration} the flat family $\mk{X}$ has generic fiber isomorphic to $\Proj R$ and special fiber isomorphic to $\Proj \C[\Gamma]$. But by our construction, $\Proj R \cong Z_{\mfi}$. Thus the condition (2) in the statement of the theorem is satisfied for this choice of $\mfm$ and family $\mk{X}$.  

Next we claim that $\Proj \C[\Gamma]$ is normal. To see this, first note that from the definitions~\eqref{def: As} of the functions $A_k$ and of the condition {\bf (P)} it is straightforward that if $(\mfi, \mfm)$ satisfies condition {\bf (P)} then so does $(\mfi, r\mfm)$ for any positive integer $r$. Using this fact together with Proposition~\ref{prop:equivalence}, \cite[Proposition 2.5]{HY2} and an argument analogous to \cite[Proof of Theorem 3.4]{HY2}, which in particular depends on the fact that $P_{\mfi,\mfm}$ is a lattice polytope \cite[Theorem 2.4]{HY1}, it follows that $\Gamma$ is generated in level $1$. It then follows that $\Proj \C[\Gamma]$ is normal (see e.g. \cite[Lemma 3 and Remark 1]{Anderson}) and is isomorphic to $\mr{Tor}(\De_{\mfi, \mfm})$.
Thus, for this family $\mk{X} \to \mathbb{A}^1$, the condition (3) in the statement of the theorem is also satisfied. 

It remains to prove the property (4) in the statement of the theorem. To see this, it suffices to check that the rings $R(V)$ and $R(V')$ for $V' = H^0(Z_{\mfi}, L_{\mfi, \mfm(\la)}) \cong H^0(G/B, \mathcal{L}_{\la})$ satisfy the compatibility hypotheses of Theorem~\ref{thm:functoriality}. 
Specifically, we must prove that there exists an inclusion map $V' \hookrightarrow V$ such that the valuation $\nu_{max}$ on $R'$ restricts to the valuation $\nu_{max}$ on $R$, and in addition, we must prove that the corresponding rational map $\psi:
  X=\Proj (R)\rightarrow X'=\Proj(R')$ is a birational
  isomorphism.  We begin with the first claim. 
  By the construction in Proposition~\ref{prop: nice m} we have that $m_i \geq \mfm(\la)_i$ for all $i=1,\ldots,n$. Define $\mfm' := (m'_1, \ldots, m'_n)$ where $m'_i := m_i - \mfm(\la)_i$ for all $i$. Then $L_{\mfi, \mfm} \cong L_{\mfi, \mfm(\la)} \otimes L_{\mfi, \mfm'}$ by Lemma~\ref{lem: tau is additive}. Also let $\tau_{\mfi, \mfm'}$ be the section of $H^0(Z_{\mfi}, L_{\mfi, \mfm'})$ defined in~\eqref{eq: def tau}. This is a non-zero section by Lemma~\ref{lem: tau nonvanishing}. 
We can now define a map 
\[
\iota: V'=H^0(Z_{\mfi}, L_{\mfi, \mfm(\la)}) \to V = H^0(Z_{\mfi}, L_{\mfi, \mfm}), \quad \sigma \mapsto \sigma \otimes \tau_{\mfi, \mfm'}
\]
where by slight abuse of notation we use $\sigma \otimes \tau_{\mfi, \mfm'}$ to denote its image under the natural map $H^0(Z_{\mfi}, L_{\mfi, \mfm(\la)}) \otimes H^0(Z_{\mfi}, L_{\mfi, \mfm'}) \to H^0(Z_{\mfi}, L_{\mfi, \mfm(\la)} \otimes L_{\mfi, \mfm'} =  L_{\mfi, \mfm})$. Since $Z_{\mfi}$ is irreducible and $\tau_{\mfi, \mfm'}$ is non-vanishing on an open dense subset, $\iota$ is injective. 
We extend this inclusion to $R(V')=R'\hookrightarrow R(V)=R$ by sending $s\in V'^{k}$ to $s\otimes \tau_{\mfi, \mfm'}^{\otimes k}$.
Moreover, the definition of the valuation $\nu_{max}$ on $R$ (respectively $R'$) normalizes holomorphic sections using $\tau_{\mfi, \mfm}$ (respectively $\tau_{\mfi, \mfm(\la)}$). Thus, since $\tau_{\mfi, \mfm} = \tau_{\mfi, \mfm(\la)} \otimes \tau_{\mfi, \mfm'}$ by Lemma~\ref{lem: tau is additive} we conclude that, given $\sigma \in V'$, we have 
$ v_{\max}(\sigma/\tau_{\mfi,\mfm(\la)})=v_{\max}(\sigma\otimes \tau_{\mfi,\mfm'}/\tau_{\mfi,\mfm(\la)}\otimes \tau_{\mfi,\mfm'})=v_{\max}(\iota(\sigma)/\tau_{\mfi,\mfm})$ as desired. The argument extends straightforwardly to higher degree components. This shows the first claim.

 Finally, we need to show that the map $\psi: \Proj(R) \to \Proj(R')$ is a birational isomorphism. 
 The corresponding morphism of function fields $\Psi: K(X')\cong R'_{((0))}\rightarrow K(X) \cong R_{((0))}$ is given by $\frac{s_1}{s_2}\mapsto \frac{s_1\otimes \tau_{i,\mfm'}^k}{s_2\otimes \tau_{i,\mfm'}^k}$, where $k$ is the common degree of $s_1$ and $s_2$ \cite[Theorem I.3.4(c), page 18]{Hartshorne}. 
 However, $\tau_{i,\mfm'}$ never vanishes on an open dense subset of $Z_{\mfi}$. Thus, as a rational functions, $\frac{s_1\otimes \tau_{i,\mfm''}^k}{s_2\otimes \tau_{i,\mfm''}^k}$ is equal to $\frac{s_1}{s_2}$. Hence $\Psi$ is injective. We also know a priori that $K(X')\cong K(X)$, since $\mu: \Proj(R') \cong G/B \to \Proj(R) \cong Z_{\mfi}$ is known to be a birational isomorphism. Thus $\Psi$ must be an isomorphism.  
Now $X$ and $X'$ are integral schemes of finite
  types over $\C$ and so $\psi$ must be a birational isomorphism \cite[Section 6.5]{Vakil}.

\end{proof}

We illustrate our main result with an example. 

\begin{eg}\label{eg:2}
Let $G=\SL_3, \mfi=(1,2,1)$, and  $\la=\al_1+\al_2$. Then $\mfm(\la)=(m_1',m_2',m_3')=(0,1,1)$.  In Example \ref{eg:1} it is shown that $(\mfi,\mfm(\la))$ does not satisfy the condition {\bf (P)}. As the figure shows below, $P_{\mfi,\mfm(\la)}$ is not a lattice polytope.
Following the proof of the Lemma above, we choose $m_3$ to be any integer $\ge m_3'=1$.  For $m_2$, we choose any integer $\ge m_2'=1$.
For $m_1$, we choose any integer $\ge\max\{m_1', 1-m_3+2M_3=1+m_3\}$.
 Note that if we take $\mfm=(1,1,1)$, then $(\mfi,\mfm)$ still satisfies the condition {\bf(P)}. However, the corresponding polytope $P_{\mfi,\mfm}$ is not a simple polytope.

\begin{picture}%(0,1,1)
  (100,100)(-70,-25)
 \put(0,0){\vector(1,0){50}}\put
(0,0){\vector(0,1){70}}\put
(0,0){\vector(-1,-1){30}}
\linethickness{0.25mm}
\put(-1, 53){$x_1$}\put(51,0){$x_2$}
\put(-37,-35){$x_3$}
\put(-3,-3){$\bullet$}\put(17,-3){$\bullet$}\put(-3,17){$\bullet$}\put(17,17){$\circ$}\put(17,37){$\bullet$}
{\color{red}\put(-10,-10){$\bullet$}}\put(0,-20){$\bullet$}\put(25,-20){$\bullet$}\put(25,0){$\bullet$}
\put(-30,-15){\small{$(0,0,\frac{1}{2})$}}
\thicklines
\drawline(0,0)(-7,-7)\drawline(-7,-7)(2,-18)\drawline(2,-18)(27,-18)\drawline(27,-18)(27,2)\drawline(27,2)(2,-18)\drawline(2,-18)(0,20)
\drawline(0,20)(-7,-7)\drawline(0,20)(20,40)\drawline(20,40)(20,0)\drawline(20,40)(27,2)\drawline(20,0)(27,-18)
\put(-25,-40){$\mfm=(0,1,1)$}
\end{picture}

\begin{picture}%(1,1,1)
(0,0)(-210,-35)

 \put(0,0){\vector(1,0){60}}\put
(0,0){\vector(0,1){70}}\put
(0,0){\vector(-1,-1){30}}
\linethickness{0.25mm}\put(-3,-3){$\bullet$}\put(-3,37){$\bullet$}\put(17,-3){$\bullet$}\put(17,57){$\bullet$}
\put(-20,-20){$\bullet$}\put(25,-20){$\bullet$}\put(25,20){$\bullet$}
\put(-3,-3){$\circ$}\put(17,-3){$\circ$}\put(-3,17){$\circ$}\put(17,17){$\circ$}\put(17,37){$\circ$}
\put(0,-20){$\circ$}\put(25,-20){$\circ$}\put(25,0){$\circ$} \put(0,-2){$\circ$}
\thicklines
\drawline(-18,-18)(27,-18)\drawline(27,-18)(27,22)\drawline(27,22)(20,60)
\drawline(20,60)(20,0)\drawline(20,60)(0,40)\drawline(0,40)(-18,-18)
\drawline(-18,-18)(27,22)\drawline(20,0)(27,-18)\drawline(0,0)(0,37)\drawline(-2,0)(18,0)
\drawline(0,0)(-18,-18)
\put(-25,-40){$\mfm=(1,1,1)$}
\end{picture}

\begin{picture}%(2,1,1)
   (0,0)(-350,-50)
 \put(-4,0){\vector(1,0){70}}\put
(-4,0){\vector(0,1){70}}\put
(-4,0){\vector(-1,-1){30}}

\linethickness{0.25mm}\put(-6,-3){$\bullet$}\put(-6,17){$\circ$}\put(-6,37){$\circ$}
\put(-6,59){$\bullet$}\put(-25,0){$\bullet$}\put(-24,-20){$\bullet$}\put(22,-20){$\bullet$}
\put(22,38){$\bullet$}

\put(15,77){$\bullet$}

\put(0,0){$\circ$}\put(15,-3){$\bullet$}\put(0,20){$\circ$}\put(15,17){$\circ$}\put(15,37){$\circ$}\put(15,57){$\circ$}
\put(0,-20){$\circ$}\put(22,0){$\circ$} \put(22,20){$\circ$}
\thicklines
\drawline(-5,60)(-23,3)\drawline(-23,3)(-23,-18)\drawline(-23,-18)(24,-18)\drawline(24,-18)(24,40)
\drawline(24,40)(17,80)\drawline(17,80)(-4,61)
\put(15,77){$\bullet$}\drawline(24,40)(-23,3)\drawline(-4,-2)(-4,60)\drawline(-6,-2)(-22,-18)
\drawline(-5,0)(16,0)\drawline(24,-18)(17,0)\drawline(18,78)(17,0)

\put(-25,-42){$\mfm=(2,1,1)$}

\end{picture}

\end{eg}

%%%%%%%%%%%%%%%%%%%%%%%%%%%

%%%%%%%%%%%%%%%%%%%%%%%%%%%%%%%%%%%

\end{document}